\newcommand\qed{\hfill$\Box$}
\newcommand\proof{{\it Proof.}\nobreak\\}
\newtheorem{tm}{Theorem}
\newtheorem{lm}{Lemma}
\newtheorem{rem}{Remark}
\newtheorem{con}{Conjecture}
\def\F{{\cal F}}
\title{New infinite family of regular edge-isoperimetric graphs}
\author{Sergei L. Bezrukov \and Pavle Bulatovic \and Nikola Kuzmanovski} 
\date{\vspace{-5ex}}
\begin{document}
\maketitle

\begin{abstract}
We introduce a new infinite family of regular graphs admitting nested 
solutions in the edge-isoperimetric problem for all their Cartesian powers. 
The obtained results include as special cases most of previously known 
results in this area.
\end{abstract} 

\section{Introduction}

Let $G=(V_G,E_G)$ be a graph and $A,B\subseteq V_G$. Denote
\begin{eqnarray*}
   I_G(A,B)&=&\{(u,v)\in E_G\bigm| u\in A,\; v\in B\}, \\ 
   I_G(A)&=& I_G(A,A), \\
   I_G(m)&=&\max_{A\subseteq V_G,\; |A|=m} |I_G(A)|. 
\end{eqnarray*}
We will often omit the index $G$.
Our subject is the following version of the {\it edge-isoperimetric problem\/}
(EIP): for a fixed $m$, $1\leq m\leq |V_G|$, find a set $A\subseteq V_G$ 
such that $|A|=m$ and $|I(A)|=I(m)$. We call such a set $A$ {\it optimal\/}. 
This problem is known to be NP-complete in general and has many applications 
in various fields of knowledge, see survey \cite{B2}. 

We restrict ourselves to graphs representable as Cartesian products of other 
graphs. Given two graphs $G=(V_G,E_G)$ and $H=(V_H,E_H)$, their
{\it Cartesian product\/} is defined as a graph $G\times H$
with the vertex-set $V_G\times V_H$ whose two vertices
$(x,y)$ and $(u,v)$ are adjacent iff either $x=u$ and $(y,v)\in E_H$, or
$(x,u)\in E_G$ and $y=v$. The graph $G^n=G\times G\times\cdots \times G$
($n$ times) is called the $n^{th}$ {\em Cartesian power} of $G$.

A particular interest in study of EIP is the case
when there exists a total order $\cal O$ on the vertex set of graphs in 
question such that for every $m$ the initial segment of 
this order of size $m$ is an optimal set. Such order $\cal O$ is called {\em
optimal order}. There exist graphs such that their Cartesian powers do not
admit optimal orders. 
For example, it is known that there does not exist optimal orders for the 
second and higher powers of cycles of length $p$ for $p> 5$ \cite{C}.
However, existence of a nested structure of solutions (that is, an optimal
order) is an important graph
property, because it provides as an immediate consequence solutions to many
applied problems. Among such problems are the cutwidth, wirelength, and
bisection width problems, construction of good $k$-partitioning of graphs
and their embedding to some other graphs \cite{B2}. This stimulates the study
of graphs which admit optimal orders for all their Cartesian powers. We
call such graphs {\em edge-isoperimetric}.

The EIP for the Cartesian powers of a graph $G$ has been well intensively
studied for various graphs, see survey \cite{B2}. To summarize some of these 
results and present our new one we need to define the 
{\it lexicographic order\/} on a set of $n$-tuples with integer entries. 
For that we say that $(x_1,\dots,x_n)$ is greater than $(y_1,\dots,y_n)$ 
iff there exists an index $i$, $1\leq i\leq n$, such that $x_j=y_j$ for 
$1\leq j<i$ and $x_i>y_i$. 
It turns out that just a few different optimal orders are discovered for the
EIPis. Some of them are proved to work just for a few 
graphs \cite{B2, BPP}. This leaves the lexicographic order to work in most of
known cases. In large this is due to the following result called in \cite{AC}
{\em local-global principle}.

\begin{tm} \label{lgp}
{\rm (Ahlswede, Cai \cite{AC})}
If lexicographic order is optimal for $G\times G$ then it is optimal for
$G^n$ for any $n\geq 3$.
\end{tm}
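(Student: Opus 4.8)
The natural approach is a compression (``shifting'') argument: I would show that for every $m$ the initial segment $L_n(m)$ of the lexicographic order on $V_{G^n}$ satisfies $|I_{G^n}(L_n(m))| = I_{G^n}(m)$, by taking any optimal set and transforming it into $L_n(m)$ without ever decreasing the number of internal edges. Identify vertices of $G^n$ with $n$-tuples over $V_G$. For a pair of coordinates $\{i,j\}$ define the compression $C_{ij}$: partition $V_{G^n}$ into ``$2$-fibres'' by fixing all coordinates except the $i$-th and $j$-th (so a $2$-fibre is a copy of $V_{G^2}$, carrying the lexicographic order in coordinates $i$ then $j$), and inside each $2$-fibre replace the current set by the lexicographic initial segment of the same size.

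The first claim is $|I_{G^n}(C_{ij}(A))| \ge |I_{G^n}(A)|$. Writing $A_z$ for the $2$-fibre of $A$ over an assignment $z$ of the other $n-2$ coordinates, and sorting the edges of $G^n$ into those inside a single $2$-fibre and those joining parallel $2$-fibres, one has
\[
  |I_{G^n}(A)| = \sum_{z} |I_{G^2}(A_z)| + \sum_{(z,z') \in E_{G^{n-2}}} |A_z \cap A_{z'}|.
\]
The hypothesis that lex is optimal for $G\times G$ gives $|I_{G^2}(A_z)| \le I_{G^2}(|A_z|)$, so the first sum cannot drop; and since the lexicographic initial segments of $V_{G^2}$ form a chain under inclusion, $|L_2(|A_z|) \cap L_2(|A_{z'}|)| = \min(|A_z|,|A_{z'}|) \ge |A_z \cap A_{z'}|$, so the second sum cannot drop either. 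Starting from an optimal $A$ and repeatedly applying $C_{ij}$ for pairs where $A$ is not yet compressed, I would argue the process terminates — within any single $2$-fibre the global lexicographic order restricts to that fibre's own order, so each nontrivial step strictly decreases the positive integer $\sum_{v \in A} \rho(v)$, where $\rho(v)$ is the lexicographic rank of $v$ in $V_{G^n}$ — leaving an optimal set $A^{*}$ that is compressed with respect to every pair of coordinates.

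It then remains to prove: a set compressed with respect to every pair of coordinates is a lexicographic initial segment. I would prove this by induction on $n$ (the case $n=2$ holding by definition). Peel off the first coordinate, $V_{G^n} = V_G \times V_{G^{n-1}}$; each slice $A^{*}_c = \{u : (c,u) \in A^{*}\}$ is compressed with respect to every pair of the remaining coordinates, so by induction $A^{*}_c = L_{n-1}(b_c)$ with $b_c = |A^{*}_c|$. Finally, compressedness with respect to the pairs $\{1,j\}$, evaluated on a suitable $2$-fibre, forces $(b_c)_{c\in V_G}$ to be non-increasing along the order of $V_G$ and to assume at most one value strictly between $0$ and $|V_{G^{n-1}}|$, and that ``staircase'' profile is precisely the condition for $\bigcup_c \{c\} \times L_{n-1}(b_c)$ to equal $L_n(|A^{*}|)$.

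The edge-counting identity and the nestedness argument for cross-fibre edges are routine; the real work — and where I expect the main obstacle to lie — is the last structural step, i.e.\ showing that local compression in all pairs of coordinates propagates into the single global ``staircase'' constraint, together with the care needed to see that the iterated compression in the middle step genuinely stabilizes (rather than cycling) at a simultaneously compressed configuration.
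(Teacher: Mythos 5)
The compression half of your plan (the identity for $|I_{G^n}(A)|$, the nestedness of lexicographic segments of $V_{G^2}$, and the termination via the sum of lexicographic ranks) is standard and sound. The fatal problem is the final structural claim: it is simply false that a set compressed with respect to every pair of coordinates must be a lexicographic initial segment, and the ``staircase'' consequence you try to extract from compressedness in the pairs $\{1,j\}$ fails as well. Take $G=K_2$ with vertex order $0<1$ (lex is optimal for $K_2\times K_2$, so the hypothesis holds) and $n=3$, and consider $A=\{(0,0,0),(0,0,1),(0,1,0),(1,0,0)\}$. For each pair of coordinates, every $2$-fibre of $A$ is either $\{(0,0),(0,1),(1,0)\}$ or $\{(0,0)\}$, i.e.\ a lexicographic initial segment of $V_{G^2}$, so $A$ is compressed with respect to all three pairs; yet $A$ is not a lexicographic initial segment of $V_{G^3}$ (its slice sizes along the first coordinate are $3$ and $1$, two values strictly between $0$ and $4$, violating exactly the profile you claim compressedness forces), and indeed $|I(A)|=3<4$. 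So your concluding induction cannot be carried out as stated: compressedness alone does not propagate to the global lex structure, and since this example is not optimal, any repair must invoke the optimality of the stabilized set $A^{*}$ in an essential way at this stage --- which your sketch never does.

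That missing step is precisely the substance of the Ahlswede--Cai theorem; note that the present paper does not prove it but quotes it from \cite{AC}. Their argument also begins with compression/stabilization, but the core of their paper is devoted to analyzing optimal compressed sets that are \emph{not} initial segments, via finer exchange arguments that exploit the optimality of lex on $G\times G$ quantitatively (through the associated $\delta$-sequence inequalities), rather than through a purely combinatorial ``staircase'' description of pairwise-compressed sets. In short: the routine part of your proposal is fine, but the step you yourself flagged as the real work is not just difficult --- the lemma you propose to prove there is false, so a genuinely different idea is required to close the argument.
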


The main difficulty in applying the local-global principle to a given graph
$G$ is to establish the optimality of the lexicographic order for $G\times G$.
For this, however, no general methods have been developed so far.
At this state of research a solution of the EIP on $G\times G$ for any other
concrete graph $G$ would be interesting and useful for developing more
general methods.
It seems difficult to characterize all graphs for whose all Cartesian powers 
the lexicographic order is optimal. Examples include graphs studied in
\cite{AC, BE, H2, L}. However, we believe to the following conjecture.

\begin{con}
If lexicographic order is optimal for $G\times G$ then $G$ is regular.
\end{con}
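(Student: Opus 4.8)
We outline a possible line of attack; this is a plan, not a proof.

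\emph{Step 1: reduction to the first power.} Write $N=|V_G|$, let $v_1<\cdots<v_N$ be the order underlying the lexicographic order, and let $L_m$ be its initial segment of size $m$. First observe that if the lexicographic order is optimal for $G\times G$ then $L_{kN}$ is exactly the product $\{v_1,\dots,v_k\}\times V_G$, whose internal edge count equals $k|E_G|+e_k N$ with $e_k:=|I_G(\{v_1,\dots,v_k\})|$; comparing $L_{kN}$ with the competing sets $S\times V_G$, $|S|=k$, whose internal edge count is $k|E_G|+|I_G(S)|\,N$, forces $e_k\ge|I_G(S)|$ for every such $S$. Hence $\{v_1,\dots,v_k\}$ is optimal in $G$ for every $k$, i.e.\ the order on $V_G$ is itself optimal for $G$. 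In particular $v_N$ has minimum degree $\delta(G)$, and for every $k$ and every $j>k$ the vertex $v_k$ has at least as many neighbours in $\{v_1,\dots,v_{k-1}\}$ as $v_j$ does; one also checks that $L_m$ is then \emph{locally} optimal, meaning that passing from $L_m$ to $L_{m+1}$ creates the largest possible number of new edges. This is not yet enough, since, for instance, the star $K_{1,n}$ admits an optimal order for its first power without being regular; the square must be used globally.

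\emph{Step 2: the global comparison.} Assume $G$ is not regular. The plan is to produce one size $m$ and one explicit set $B$ with $|B|=m$ and $|I(B)|>|I(L_m)|$, contradicting optimality of $L_m$. The point is that the lexicographic order fills one fibre $\{v_i\}\times V_G$ completely before starting the next, thereby deferring the ``horizontal'' edges between two fibres and possibly squandering slots on low-degree tail vertices of a fibre. Three families of candidate witnesses present themselves: (a) the \emph{blocks} $B_k=\{v_1,\dots,v_k\}\times\{v_1,\dots,v_k\}$, with $|I(B_k)|=2k\,e_k$ --- for the star $|I(B_2)|=4>3=|I(L_4)|$, and this is also why the lexicographic order fails on grids $P_t\times P_t$ with $t\ge 3$, where square regions beat row-based initial segments; (b) products of two prefixes $\{v_1,\dots,v_k\}\times\{v_1,\dots,v_l\}$, with $|I|=k\,e_l+l\,e_k$; (c) local rearrangements replacing a low-quality tail vertex of a partly filled fibre by a better-placed one --- this defeats $L_N$ for $G=K_t+K_1$, where $|I(L_N)|=|E_G|$ but $I_{G\times G}(N)\ge|E_G|+1$ (take a copy of $K_t$ inside a rook-row of $G\times G$ together with one adjacent vertex). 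The quantitative heart of the argument is to show that for \emph{every} non-regular $G$ at least one such witness strictly beats the corresponding lexicographic value, the slack coming from the degree gap $\Delta(G)-\delta(G)\ge1$ together with the nested-neighbour structure from Step~1.

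\emph{Step 3: the main obstacle.} Making Step~2 uniform in $G$ is the crux. An optimal order of $G$ can be badly behaved: its degree sequence need not be monotone ($K_{2,3}$), the leading vertex $v_1$ need not have maximum degree ($K_4$ minus an edge), and $G$ may have isolated vertices at the tail of every optimal order; hence the correct $m$ and the correct shape of $B$ depend delicately on the edge profile $e_1\le e_2\le\cdots$. Moreover local optimality of initial segments, which comes for free, is genuinely weaker than optimality, so a purely local computation cannot finish --- the witness must be a global object. Our concrete plan is: (i) verify the conjecture by computer for all small $N$ and record which pair $(m,B)$ witnesses each failure; (ii) from that data isolate a single parametrised family of witnesses together with a ``gap inequality'' bounding the witness-versus-strip difference below by a positive quantity whenever $\Delta(G)>\delta(G)$, so that the absence of a witness forces all increments $e_k-e_{k-1}$ to coincide, i.e.\ $G$ regular; (iii) if (ii) resists, invoke Theorem~\ref{lgp} to promote lex-optimality of $G\times G$ to lex-optimality of every $G^n$, hence to a matching condition for the edge-isoperimetric profile of the infinite Cartesian power --- a product-graph invariant that one expects to detect irregularity. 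We expect step (ii), a clean gap inequality valid for all non-regular $G$, to be the genuine obstacle; everything else is bookkeeping around the first-power reduction.
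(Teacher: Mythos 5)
You should first be aware that this statement is stated in the paper as a \emph{conjecture}: the paper offers no proof of it, so there is no ``paper proof'' to match, and your text cannot be assessed as an alternative route to a known argument. Judged on its own terms, your submission is, as you yourself say, a plan rather than a proof, and it contains a genuine gap exactly where the mathematical content of the conjecture lies. Step 1 is correct and worth keeping: comparing $L_{kN}=\{v_1,\dots,v_k\}\times V_G$ with $S\times V_G$ does show that the order underlying the lexicographic order must itself be optimal for $G$, that $v_N$ has minimum degree, and that prefixes have the nested-neighbour property. But, as your star example shows, this first-power information cannot distinguish regular from non-regular graphs, so everything rests on Step 2. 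There you only list three \emph{candidate} families of witnesses (square blocks, products of prefixes, tail rearrangements) and verify them on two isolated examples; the ``gap inequality'' of Step 3(ii), which would have to produce a strictly better competitor for \emph{every} non-regular $G$ with lex-optimal square, is precisely the open problem and is nowhere argued. Step 3(iii) does not help either: Theorem \ref{lgp} only propagates lex-optimality from $G\times G$ to higher powers, so it gives no constraint beyond what the square already provides, and no mechanism is proposed by which the infinite-power profile ``detects irregularity.''

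One concrete suggestion that your plan misses and that the paper itself points to: by Theorem \ref{reg} (Bonnet--Sikora), $G$ is regular as soon as its $\delta$-sequence is symmetric, i.e.\ $\delta(j)+\delta(|V_G|-j+1)=\delta(|V_G|)$ for all $j$. So a cleaner target than hunting for ad hoc witness sets is: show that lex-optimality of $G\times G$ forces $\delta_G$ to be symmetric. Your Step 1 already yields $\delta_G$ as the increment sequence of an optimal order of $G$, and equation-level comparisons of compressed sets in $G\times G$ (initial segments of columns versus their reflections about the diagonal, in the spirit of Lemma \ref{l5} and the complementation used in Case 3 of the paper's main proof) are the natural place to extract the symmetry identity. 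That reduction would turn the vague ``gap inequality'' into a precise statement about $\delta$-sequences, which is where any eventual proof of this conjecture is likely to live; as it stands, your proposal establishes only the easy first-power consequences and leaves the conjecture untouched.
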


All graphs studied in the above mentioned papers \cite{AC, BE, H2, L} are
regular. In the light of this conjecture we emphasize on Cartesian powers 
of regular graphs. It is more convenient to work not directly with graphs, 
but with their numeric characteristic $\delta_G$ that we call 
{\em $\delta$-sequence}. For a graph $G=(V,E)$ denote
\begin{eqnarray*}
   \delta(m) &=& I(m) - I(m-1), \mbox{ with } \delta(1) = 0,\\
   \delta_G &=& (\delta(1),\delta(2).\dots,\delta(|V|)).
\end{eqnarray*}
For $|V_G|=p$ we call $\delta_G$ {\em symmetric} if 
  $$ \delta(i) + \delta(p-i+1) = \delta(p)\qquad\mbox{for } i=1,\dots,p.$$
For example, the $\delta$-sequence of the 3-dimensional unit
cube $\delta_{Q^3} = (0,1,1,2,1,2,2,3)$ is symmetric.
It is easily shown that if $G$ is regular then $\delta_G$ is symmetric. The
recent result shows that the converse is also true.
\begin{tm} \label{reg}
{\rm (Bonnet, Sikora \cite{BS})}
If $\delta_G$ is symmetric then $G$ is regular.
\end{tm}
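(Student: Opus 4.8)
The plan is to distill the symmetry hypothesis into a single numerical identity, namely that the minimum degree of $G$ equals its average degree, and then invoke the elementary fact that this forces $G$ to be regular. Write $p=|V_G|$ and $\delta_{\min}=\min_{v\in V_G}\deg(v)$. I would begin by pinning down two features of the $\delta$-sequence directly from the definitions. Since $I(0)=0$ and $I(p)=|I(V_G)|=|E_G|$, the telescoping sum gives $\sum_{i=1}^{p}\delta(i)=I(p)-I(0)=|E_G|$. Next, every $(p-1)$-element subset of $V_G$ has the form $V_G\setminus\{v\}$ and induces exactly the edges of $G$ not incident to $v$, i.e.\ $|E_G|-\deg(v)$ of them; hence $I(p-1)=|E_G|-\delta_{\min}$ and therefore $\delta(p)=I(p)-I(p-1)=\delta_{\min}$.

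The second step is to sum the symmetry relations. Reindexing by $i\mapsto p-i+1$ shows $\sum_{i=1}^{p}\delta(p-i+1)=\sum_{i=1}^{p}\delta(i)=|E_G|$. Adding the $p$ equalities $\delta(i)+\delta(p-i+1)=\delta(p)$ over $i=1,\dots,p$ then yields $2|E_G|=p\,\delta(p)=p\,\delta_{\min}$, so $\delta_{\min}=2|E_G|/p$. Since $2|E_G|/p=\frac1p\sum_{v\in V_G}\deg(v)$ is precisely the average degree of $G$, and the minimum degree of any graph never exceeds its average degree with equality exactly when all vertices have the same degree, we conclude that $G$ is $\delta(p)$-regular.

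I do not expect a genuine obstacle here; the whole content is the observation that summing the symmetry relations collapses the $\delta$-sequence to the identity ``minimum degree $=$ average degree.'' The only points that need a moment's care are the boundary evaluations $I(0)=0$, $I(p)=|E_G|$, and the identification $I(p-1)=|E_G|-\delta_{\min}$, all of which are immediate, and the handling of degenerate situations such as $E_G=\emptyset$ (where the computation gives $\delta_{\min}=0$, so $G$ is $0$-regular) or $p=1$ (trivially regular), which fall out of the same argument.
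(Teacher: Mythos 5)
Your argument is correct: the boundary evaluations $I(p)=|E_G|$, $I(p-1)=|E_G|-\delta_{\min}$ (hence $\delta(p)=\delta_{\min}$), and the summation of the symmetry identities to get $2|E_G|=p\,\delta_{\min}$ are all valid, and the conclusion ``minimum degree equals average degree, hence regular'' is sound, including in the degenerate cases you mention. Note, however, that the paper itself offers no proof of this statement -- it is quoted as a known result of Bonnet and Sikora \cite{BS} -- so there is no internal argument to compare against; your write-up is essentially the natural short proof of that cited theorem and could stand as a self-contained justification.
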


Our experience shows that in order for the lexicographic order to be optimal 
for $G\times G$, the graph $G$ has to be dense, that is, have many edges. It
seems that high density and regularity are crucial conditions for the
lexicographic order to be optimal. Cliques have highest density and the
lexicographic order is optimal for every their Cartesian power \cite{L}. It is
interesting to mention that removal of an edge from a clique leads to the
next best choice for a high density graph, but products of this graph do not
admit any optimal order. A natural way to construct dense regular graphs is to 
start with a clique $K_p$ (or $K_{p,p}$) and remove a factor from it. 
In particular, one can consider removing $s\geq 1$ disjoint perfect matchings 
$M$ from $K_p$ or $K_{p,p}$. These are exactly the graphs being studied in 
\cite{AC, BE, H2, L}. The $\delta$-sequences of these graphs are as follows:
\begin{itemize}
\item $\delta_{K_p} = (\{0,1,2,3,\dots,p-1\})$
\item $\delta_{K_p - sM} = (\{0,1,2,\dots,\frac{p}{2}-1\},\;\; \{\frac{p}{2}-s,
\frac{p}{2}-s+1, \frac{p}{2}-s+2,\dots,p-s-1\})$
\item $\delta_{K_{p,p}} = (\{0,1\},\; \{1,2\},\; \{2,3\},\dots,\{p-1,p\})$
\end{itemize}
The braces above show partitioning of $\delta$-sequences into maximum
monotonic subsequences. Thus, $K_p$ has just one monotonic subsequence,
$K_p - sM$ (a clique with $s$ disjoint perfect matchings removed) has 2 
(maximum) monotonic subsequences, and $K_{p,p}$ has $p$ ones. Our main 
result generalizes all the above mentioned results for graphs $H_{s,p,i}$
defined by their $\delta$-sequences. Namely,
$\delta_{H_{s,p,i}}$ must admit partitioning into $s$ monotonic subsequences 
of size $p$ of the form:
$$(\{0,1,\dots,p-1\},\;\;\{p-i,\dots, p-i+(p-1)\},\;\dots,\;\{(s-1)(p-i),
\dots, (s-1)(p-i) + p-1\}).$$
For example, for $s=3$, $p=4$, $i=2$ one has
$\delta_{H_{3,4,2}} = (\{0,1,2,3\},\;\{2,3,4,5\},\;\{4,5,6,7\})$.

In the next section we present a construction of some graphs $H_{s,p,i}$
with such $\delta$-sequences. Our main result is as follows: 

\begin{tm}
{\rm (Main result)}\label{main}
Lexicographic order is optimal for $H_{s,p,i}\times H_{s,p,i}$ for 
$p\geq 3$, $s\geq 2$, and $1\leq i\leq p-i$.
\end{tm}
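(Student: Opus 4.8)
The plan is to combine a column‑compression with a reduction to a one‑dimensional extremal problem about the $\delta$‑sequence. Write $G=H_{s,p,i}$, $N=|V_G|=sp$, and fix the order ${\cal O}$ on $V_G$ with respect to which the lexicographic order is optimal for $G$ and realizes the stated $\delta$‑sequence (provided by the construction of Section~2); the lexicographic order on $V_G\times V_G$ uses ${\cal O}$ in the first coordinate. Given $A\subseteq V_G\times V_G$ with $|A|=m$, for $x\in V_G$ let $A_x=\{y:(x,y)\in A\}$ and $a_x=|A_x|$, and replace each $A_x$ by the ${\cal O}$‑initial segment of size $a_x$. Since ${\cal O}$ is optimal for $G$, the sum $\sum_x|I_G(A_x)|$ does not decrease; and after the replacement $|A_x\cap A_{x'}|=\min(a_x,a_{x'})$, which is at least its former value, so the number of cross‑column edges does not decrease either. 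Hence it suffices to bound $|I(A)|$ for sets all of whose columns are initial segments, where
$$|I(A)|=\sum_{x\in V_G}I_G(a_x)+\sum_{\{x,x'\}\in E_G}\min(a_x,a_{x'}).$$

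Next I would rewrite both sums by a layer‑cake decomposition. For $t=1,\dots,N$ put $S_t=\{x:a_x\ge t\}$ and $s_t=|S_t|$, so that $s_1\ge s_2\ge\cdots\ge s_N\ge0$, $\sum_t s_t=m$, and $s_t\le N$. Then $\sum_x I_G(a_x)=\sum_t\delta(t)s_t$ while $\sum_{\{x,x'\}\in E_G}\min(a_x,a_{x'})=\sum_t|I_G(S_t)|\le\sum_t I_G(s_t)$, whence
$$|I(A)|\le\Psi(s):=\sum_{t=1}^N\bigl(\delta(t)\,s_t+I_G(s_t)\bigr).$$
When $A$ is the lexicographic initial segment $C$ of size $m$, every $S_t$ is itself an ${\cal O}$‑initial segment, so all the inequalities above are equalities and $|I(C)|=\Psi(s^\star)$, where $s^\star=(\underbrace{k+1,\dots,k+1}_{r},\underbrace{k,\dots,k}_{N-r})$ with $m=kN+r$, $0\le r<N$. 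As $|I(C)|\le I_{G\times G}(m)$ trivially, the theorem reduces to the purely numerical inequality $\Psi(s)\le\Psi(s^\star)$ for every non‑increasing $s$ with $0\le s_t\le N$ and $\sum_t s_t=m$. It is convenient to read $\Psi(s)=\sum_{(t,v)\in T}\bigl(\delta(t)+\delta(v)\bigr)$, where $T=\{(t,v):1\le v\le s_t\}$ is a Young diagram of $m$ cells in the $N\times N$ grid; thus the assertion is that for this separable weight the near‑rectangular diagram attached to $s^\star$ is optimal. The symmetry $\delta(t)+\delta(N+1-t)=\delta(N)$ induces an involution $T\mapsto\overline T$ on such diagrams that changes $\Psi$ by a constant and sends the $s^\star$‑diagram for $m$ to the one for $N^2-m$, so one may assume $m\le N^2/2$.

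The remaining and decisive work is this numerical inequality, which I would attack by a smoothing/exchange argument: given $s\ne s^\star$, perform a move $s\mapsto s-e_a+e_b$ with $a<b$ (or the conjugate move $s\mapsto s+e_a-e_b$) that pushes $s$ toward $s^\star$ and under which $\Psi$ changes by $\bigl(\delta(b)+\delta(s_b+1)\bigr)-\bigl(\delta(a)+\delta(s_a)\bigr)$; since $\sum_t s_t^2$ then decreases, iterating terminates at $s^\star$. The subtlety, and the main obstacle, is that $\delta_{H_{s,p,i}}$ is only \emph{block‑monotone}: it increases by $1$ within each of the $s$ blocks of length $p$ but drops by $i-1$ at every block boundary, so a move between a largest and a smallest coordinate of $s$ may strictly decrease $\Psi$, and one must choose $a,b$ in light of where $a,b$ and the values $s_a,s_b$ sit relative to block boundaries, so that the within‑block increases outweigh the at most $s-1$ boundary drops that can be crossed. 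The hypotheses $p\ge3$, $s\ge2$ and, crucially, $i\le p-i$ are exactly what makes this compensation go through: for instance $\delta(N)=(s-1)(p-i)+p-1$ strictly exceeds every other $\delta$‑value, and crossing two or more blocks already yields a strict gain because $2(p-i)\ge p>p-1$. Organizing this case analysis — conveniently by induction on the number of blocks $s$, since the first $(s-1)p$ entries of $\delta_{H_{s,p,i}}$ coincide with $\delta_{H_{s-1,p,i}}$ — is the heart of the argument; with the inequality established, the theorem follows at once.
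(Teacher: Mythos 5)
Your reduction is sound, but it only brings you to the starting line of the paper's argument, not past it. Column compression plus the layer-cake identity is the standard compression step (the paper simply cites the existence of compressed optimal sets and its formula (\ref{e1})), and your target inequality $\Psi(s)\le\Psi(s^\star)$ is, via your own Young-diagram reformulation, exactly the statement that the lexicographic initial segment maximizes $\sum_{(t,v)\in T}\bigl(\delta(t)+\delta(v)\bigr)$ over all $m$-element downsets $T$ of the $sp\times sp$ grid --- i.e.\ it is a restatement of the theorem in the compressed setting, which is where the paper's Section 4 begins. (Your symmetry involution reducing to $m\le N^2/2$ is likewise the paper's Lemma~\ref{l5} / Case 3.) All of the actual content of the theorem is therefore concentrated in the inequality you leave unproved.

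And there the proposal has a genuine gap: you offer only a smoothing/exchange heuristic and yourself concede that organizing the case analysis is ``the heart of the argument.'' The specific moves you propose, transferring a single unit $s\mapsto s-e_a+e_b$, are too weak to carry a monotone smoothing: because $\delta_{H_{s,p,i}}$ drops by $i-1$ at each block boundary, such a move can strictly decrease $\Psi$, and you give no selection rule that provably never does while still terminating at $s^\star$. The paper's proof instead uses larger, structured transformations of the downset --- vertical shifts of whole column segments by multiples of $p$ (Lemmas~\ref{l1} and~\ref{l2}), reflection of a segment about the main diagonal combined with a shift (Lemma~\ref{l3}), whole-column shifts (Lemma~\ref{l4}), and complementation via regularity (Lemma~\ref{l5}) --- organized as an induction on $s$ whose base case $s=2$ is the full Bezrukov--Els\"asser theorem (Theorem~\ref{tm2}), with an eight-part case analysis (Cases 1, 2a--2f, 3) in which each transformation is verified, using those lemmas and the hypotheses $p\ge 3$, $i\le p-i$, not to decrease the weight. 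Your closing remarks (induct on the number of blocks, $2(p-i)\ge p$ compensates boundary drops) point in the right direction but are not carried out; as it stands the proposal reduces the theorem to its hard core and stops there. A further small point: your setup assumes a graph $H_{s,p,i}$ with the stated $\delta$-sequence exists and admits nested solutions (Section 2 only constructs it when $i$ divides $p$), whereas the paper sidesteps this by proving the weighted-downset statement directly.
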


Due to Theorem \ref{lgp} this result is also valid for $H^n_{s,p,i}$ for
$n\geq 2$. Note that the family of graphs $H^n_{s,p,i}$ include cliques,
cliques without $s$ perfect matchings, and also $t$-partite graphs
$K_{p,\dots,p}$. Thus, the only published family of regular graphs that
admit optimal orders for all their Cartesian powers and that are not covered 
by our main result are complete bipartite graphs with $s$ perfect matchings 
removed \cite{BE}.

The paper is organized as follows.
In the next section we present a general construction of regular graphs
admitting optimal orders and show that the graphs $H_{s,p,i}$ for some values
of parameters can be constructed this way. In Section 3 we explore some 
properties of graphs $H_{s,p,i}$ and present some auxiliary results used in
the proof of our main result in Section 4. Some computational experiments 
and concluding remarks are outlined in Section 5.

\section{Clique structure of optimal orders}

Throughout this section we assume that $G=(V,E)$ is a graph admitting an
optimal order.
We start with some basic properties of the $\delta$-sequence for a graph
$G=(V,E)$. Obviously, $\delta_G(i)\geq 0$ for all $i\in \{1,\dots,|V|\}$.
Moreover the strict inequality for $i\geq 2$ holds iff $G$ is connected.

\begin{lm}\label{lem1}
Let $G=(V,E)$ be a graph that admits nested solutions.
Then $\delta_G(i+1) -\delta_G(i)\leq 1$ for all $i\in \{1,\dots |V|\}$.
\end{lm}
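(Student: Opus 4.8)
The plan is to argue by contradiction using an optimal order. Suppose $\delta_G(i+1) - \delta_G(i) \geq 2$ for some $i$. Let $\mathcal{O} = (v_1, v_2, \dots, v_{|V|})$ be an optimal order, and write $A_k$ for the initial segment $\{v_1,\dots,v_k\}$; by optimality $|I(A_k)| = I(k)$ for every $k$. Then $\delta_G(k) = |I(A_k)| - |I(A_{k-1})|$ counts the number of edges from $v_k$ back into $A_{k-1}$ — call this the \emph{back-degree} of $v_k$. So our assumption says $v_{i+1}$ has back-degree at least two more than $v_i$'s back-degree into $A_{i-1}$.

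The key step is a local exchange argument. I would consider the modified order obtained by swapping $v_i$ and $v_{i+1}$, i.e. the sequence $v_1,\dots,v_{i-1},v_{i+1},v_i,v_{i+2},\dots$, and compare the sizes of the induced-edge sets of the new initial segments with the old ones. Only the segment of size $i$ changes: the old one is $A_{i-1}\cup\{v_i\}$, the new one is $A_{i-1}\cup\{v_{i+1}\}$. Write $b(v)$ for the number of neighbours of $v$ in $A_{i-1}$. Then the new size-$i$ segment has $|I(A_{i-1})| + b(v_{i+1})$ induced edges, while the old had $|I(A_{i-1})| + b(v_i)$. We know $b(v_i) = \delta_G(i)$ and $b(v_{i+1}) + [v_i \sim v_{i+1}] = \delta_G(i+1)$, where $[v_i\sim v_{i+1}]\in\{0,1\}$ indicates adjacency. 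Hence $b(v_{i+1}) \geq \delta_G(i+1) - 1 \geq \delta_G(i) + 1 = b(v_i) + 1$, so the swapped order produces a size-$i$ segment with strictly more induced edges than $I(i)$, contradicting the maximality in the definition of $I(i)$.

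The one point requiring care — and the main (minor) obstacle — is confirming that no \emph{other} initial segment is affected by the swap and that the final segments of size $\geq i+1$ still achieve their optima. This is immediate because for $k \leq i-1$ and for $k \geq i+1$ the initial segment of the swapped order is set-theoretically identical to $A_k$, so its induced-edge count is unchanged and still equals $I(k)$. Thus the only segment whose count changes is the one of size $i$, and it strictly exceeds $I(i)$ — an outright contradiction with the definition of $I(i)$ as a maximum. Therefore $\delta_G(i+1) - \delta_G(i) \leq 1$ for all $i$. \qed
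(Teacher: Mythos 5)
Your proof is correct and follows essentially the same route as the paper: both argue by contradiction, comparing the initial segment $A_{i-1}\cup\{v_i\}$ with $A_{i-1}\cup\{v_{i+1}\}$ and showing the latter has strictly more induced edges when $\delta_G(i+1)\geq\delta_G(i)+2$, contradicting optimality of the order. Your explicit accounting of the possible edge $v_i\sim v_{i+1}$ via the indicator term is in fact slightly more careful than the paper's wording of the same step.
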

\begin{proof}
Assume to the contrary that there exists $j$, $1\leq j\leq |V|$, such that
$\delta(j+1)-\delta(j)>1$, that is, $\delta_G(j+1)\geq \delta_G(j)+2$.
Let $x_1$ and $x_2$ be the $j$-th and $j+1$-th vertices in the optimal order
$\cal O$ of $G$ and $X$ be the set of vertices preceeding $x_1$ in this order.
Then $|I(X,\{x_1\})|=\delta(j)$. Since $\delta_G(j+1)\geq \delta_G(j)+2$ 
we have $|I_G(X,\{x_2\})|=\delta_G(j+1)\geq \delta_G(j)+1$.
Hence, $|I_G(X\cup\{x_2\})| > |I_G(X\cup\{x_1\})|$, which contradicts the
optimality of order $\cal O$.
\hfill\qed
\end{proof}

We call a subsequence of consecutive entries $\delta(a),\dots,\delta(b)$ of
$\delta_G$ {\em monotonic segment} if $\delta(a)<\cdots<\delta(b)$ and it is
longest with respect to this property. This way $\delta_G$ can be partitioned
into monotonic segments. Denote by $M_{G,i}$ the set of vertices
corresponding to the entries of the $i$-th monotonic segment. We call
$M_{G,i}$ the {\em $i$-th monotonic set}.

\begin{tm}\label{thm1}
Let $G=(V,E)$ admit nested solutions and $M_{G,i}$ be the $i^{th}$ monotonic 
set corresponding to the monotonic segment $\delta(a),\dots,\delta(b)$.
Denote $X=\cup_{s=1}^{i-1}M_{G,s}$.
Then $M_{G,i}$ induces a clique in $G$ and $\forall u\in M_{G,i}$ it holds
$|I_G(X,\{u\})|=\delta(a)$.
\end{tm}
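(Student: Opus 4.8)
The plan is to analyze the structure forced by the optimal order $\mathcal{O}$ restricted to the vertices in $M_{G,i}$, using Lemma \ref{lem1} to control how $\delta$ can behave. Let the $i$-th monotonic segment be $\delta(a)<\delta(a+1)<\cdots<\delta(b)$, and let $v_a,v_{a+1},\dots,v_b$ be the corresponding vertices in $\mathcal{O}$, so that $M_{G,i}=\{v_a,\dots,v_b\}$. For $a\le t\le b$ let $S_t=\{$vertices strictly before $v_t$ in $\mathcal{O}\}$, so $|I_G(S_t,\{v_t\})|=\delta(t)$ and $I_G(S_a,\{v_a\})=I_G(X,\{v_a\})$ has size $\delta(a)$. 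The first thing I would establish is the ``clean start'' claim: $\delta(a)$ counts exactly the edges from $v_a$ into $X$, and in fact every vertex $u\in M_{G,i}$ satisfies $|I_G(X,\{u\})|=\delta(a)$. The point is that by Lemma \ref{lem1} each step inside the segment increases $\delta$ by exactly $1$ (it increases, being monotonic, and cannot increase by more than $1$), so $\delta(t)=\delta(a)+(t-a)$ for $a\le t\le b$.

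Next I would prove the clique claim by an exchange/counting argument. Fix $t$ with $a<t\le b$. We have $\delta(t)=|I_G(S_t,\{v_t\})|$ where $S_t=X\cup\{v_a,\dots,v_{t-1}\}$, so
$$|I_G(S_t,\{v_t\})| = |I_G(X,\{v_t\})| + |I_G(\{v_a,\dots,v_{t-1}\},\{v_t\})| = \delta(a)+(t-a).$$
Since $|I_G(\{v_a,\dots,v_{t-1}\},\{v_t\})|\le t-a$ trivially (there are only $t-a$ vertices in that set, each contributing at most one edge to $v_t$), this forces both $|I_G(X,\{v_t\})|\le\delta(a)$ \emph{and} $|I_G(\{v_a,\dots,v_{t-1}\},\{v_t\})|=t-a$, i.e.\ $v_t$ is adjacent to all of $v_a,\dots,v_{t-1}$. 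Running this over all $t$ from $a+1$ to $b$ shows every pair in $M_{G,i}$ is adjacent, so $M_{G,i}$ induces a clique. The remaining direction of the equality $|I_G(X,\{v_t\})|=\delta(a)$ — the inequality $\ge\delta(a)$ — needs a separate argument: if some $v_t$ had fewer than $\delta(a)$ edges into $X$, I would produce a better set by swapping $v_t$ with $v_a$ (or more carefully, by reordering within $M_{G,i}\cup X$), contradicting optimality of $\mathcal{O}$; alternatively, one observes that optimality of $\mathcal{O}$ means $S_t\cup\{v_a\}$ is not worse than $S_t\cup\{v_t\}$ would be under a relabeling, forcing the edge counts into $X$ to be equal across all of $M_{G,i}$.

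The main obstacle I expect is the second half of the last claim: showing $|I_G(X,\{u\})|=\delta(a)$ for \emph{every} $u\in M_{G,i}$, not just a monotone nonincreasing pattern. The clique structure gives $|I_G(X,\{v_t\})|\le\delta(a)$ cleanly, but the matching lower bound requires exploiting the optimality of $\mathcal{O}$ more delicately — essentially arguing that if two vertices of $M_{G,i}$ had different numbers of neighbors in $X$, one could reorder $\mathcal{O}$ within $X\cup M_{G,i}$ (placing the higher-degree-into-$X$ vertex first) to strictly increase $|I_G(S)|$ for the prefix $S$ of the appropriate size, contradicting that $\mathcal{O}$ is optimal. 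I would handle this by a careful prefix-swapping argument: compare the value of the prefix of size $|X|+1$ obtained by appending $v_t$ versus $v_a$ to $X$, and use the fact that the clique edges among $M_{G,i}$ are symmetric so they do not interfere with the comparison. Care is needed with the boundary cases $a=1$ (where $X=\emptyset$ and $\delta(a)=\delta(1)=0$) and where the monotonic segment has length one.
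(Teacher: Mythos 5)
Your overall strategy---writing $\delta(t)=\delta(a)+(t-a)$ via Lemma \ref{lem1} and splitting $\delta(t)=|I_G(X,\{v_t\})|+|I_G(\{v_a,\dots,v_{t-1}\},\{v_t\})|$---is the same as the paper's, but your key deduction is inverted and, as written, is a non sequitur. From the trivial bound $|I_G(\{v_a,\dots,v_{t-1}\},\{v_t\})|\le t-a$ together with the sum equalling $\delta(a)+(t-a)$ you may only conclude $|I_G(X,\{v_t\})|\ge\delta(a)$; it does not force $|I_G(X,\{v_t\})|\le\delta(a)$, nor full adjacency (the first term could be $\delta(a)+1$ and the second $t-a-1$). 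The clique property genuinely requires the optimality input, namely the upper bound $|I_G(X,\{u\})|\le\delta(a)$ for every vertex $u\notin X$: otherwise $|I_G(X\cup\{u\})|>|I_G(X)|+\delta(a)=|I_G(X\cup\{v_a\})|$, contradicting that the initial segment of size $|X|+1$ is optimal. This is precisely the prefix comparison you postpone to the end, but you attach it to the wrong inequality. Moreover, your ``separate argument'' for $|I_G(X,\{v_t\})|\ge\delta(a)$ (swapping $v_t$ with $v_a$ to get a better set if $v_t$ has fewer than $\delta(a)$ neighbors in $X$) does not produce a contradiction: a vertex with fewer edges into $X$ appearing later in the order is perfectly consistent with optimality; that direction is exactly the one that comes for free from the counting.

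Once the two inequalities are paired correctly---$\ge\delta(a)$ from the counting, $\le\delta(a)$ from optimality of the prefix $X\cup\{v_a\}$---equality follows, the second term is forced to equal $t-a$, so $v_t$ is adjacent to all of $v_a,\dots,v_{t-1}$, and running over $t$ gives the clique; this is essentially the paper's inductive argument stated contrapositively (if $S_k$ is not a clique, then $|I_G(X,\{v_{a+k}\})|\ge\delta(a)+1$, contradicting optimality). So all the ingredients are present in your proposal, but the misassignment of which bound is free and which needs optimality makes your central ``this forces both'' step false as stated, and the promised repair for the other direction does not work in the form you describe.
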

\begin{proof}
Let $V=\{v_1,v_2,\dots , v_{|V|}\}$, where the vertices are labeled according
to an optimal order for $G$.
Denote $S_k=\{v_j\in M_{G,i}\bigm| j\leq a+k \}$
We prove the theorem by induction on $k$.

For $k=0$ the set $S_1=\{v_a\}$ obviously induces a clique. Moreover,
$|I(X,S_1)|=\delta(a)$. Assume that $k\geq 1$ and that the theorem is true 
for all $k'<k$. 

Since $\delta(a),\dots\delta(b)$ is a monotonic segment, one has
$\delta(a+k)=\delta(a)+k$. If $S_k$ is not a clique then 
$|I_G(S_k, \{v_{a+k}\})| < k-1$, which implies 
$|I_G(X,\{v_{a+k}\})|\geq \delta(a)+1  > |I_G(X,\{v_a\})|$. This, in turn,
implies $|I_G(X\cup \{v_{a+k}\})| > |I_G(X\cup \{v_a\}|$ which contradicts
the optimality of the vertex order.  \hfill\qed
\end{proof}

The following observation will be used in the proof of Theorem \ref{thm2}.

\begin{rem}\label{rem1}
If $M_{G,i}$ and $M_{G,j}$ are monotonic sets with $i<j$ and
$X\subseteq \cup_{s=1}^{i-1}M_{G,s}$, then for any $u\in M_{G,i}$ and $v\in
M_{G,j}$ it holds $|I_G(X,\{u\})|\geq |I_G(X,\{v\})|$. 
\end{rem}

For graphs $G_1=(V_1,E_1)$ and $G_2=(V_2,E_2)$ admitting optimal orders
denote by $G_1\circ G_2$ the {\em composition} of $G_1$ and $G_2$. That is,
the graph $H=(V_H,E_H)$ with 
\begin{eqnarray*}
V_H &=& V_1\cup V_2,\\
E_H &=& E_1\cup E_2\cup \{(a,b)\bigm | a\in V_1 \mbox{ and } b\in V_2\}.
\end{eqnarray*}

Assuming $G$ admits an optimal order $\cal O$,
denote by ${\cal F}_G(i)$ the set of the first $i$ vertices of $V_G$ in the
order $\cal O$. Further denote by $P_{G,i}=(V_i, E_i)$ the subgraph 
of $G$ induced by the vertex set ${\cal F}_G(i)$. We call $P_{G,i}$ the 
{\em $i$-th partial} of $G$. It is easily seen that the order $\cal O$ is 
optimal for $P_{G,i}$ for $i=1,\dots, |V_G|$.
Denote by ${\cal P}_G=\{P_{G,i}\bigm | 0\leq i\leq |V|\}$ the set of
all partials of $G$. The next theorem presents a general construction of
graphs admitting an optimal order. This construction will be then used to
construct graphs $H_{s,p,i}$.

\begin{tm}\label{thm2}
Let a graph $G$ admit nested solutions and 
$H_1,H_2,\dots, H_n\in {\cal P}_G$ with $H_i=(V_i,E_i)$ and
$|V_1|\geq |V_2|\geq \cdots \geq |V_n| > 0$. Further let $\{M_{{H_i},s}\}$,
be the set of monotonic sets of $H_i$ and the vertices of
each $H_i$ be ordered in its optimal order ${\cal O}_i$. Then the following
order $\cal O$ is optimal for $S_n=H_1\circ \cdots \circ H_n$: 
for $u\in M_{{H_i},s}$ and $v\in M_{{H_j},t}$ we write $u <_{\cal O} v$ iff
\begin{itemize}
\item[$(i)$] $s < t$, or
\item[$(ii)$] $s = t$ and $i < j$, or
\item[$(iii)$] $s = t$, $i = j$ and $u <_{{\cal O}_i} v$
\end{itemize}
\end{tm}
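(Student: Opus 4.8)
The plan is to show that the order $\cal O$ described in the statement is optimal for $S_n = H_1 \circ \cdots \circ H_n$ by verifying that every initial segment of $\cal O$ is an optimal set. A convenient way to organize this is to analyze the $\delta$-sequence of $S_n$ directly: I would compute, for each vertex $v$ in the position dictated by $\cal O$, the number of edges $|I_{S_n}(X,\{v\})|$ where $X$ is the set of all vertices preceding $v$. If this quantity equals $\delta_{S_n}(|X|+1)$ for every $v$ — i.e.\ if adding vertices in the order $\cal O$ always realizes the maximum possible edge-gain — then $\cal O$ is optimal by the greedy/exchange characterization already used implicitly in Lemma~\ref{lem1} and Theorem~\ref{thm1}. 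The first step is therefore to get a clean formula for the edge-gain of the next vertex under $\cal O$.

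The key structural observation is how edges of $S_n$ split: for $u \in V_i$ and $v \in V_j$ with $i \neq j$, $u$ and $v$ are always adjacent (by definition of composition), while within a single $V_i$ the adjacency is that of $H_i \in {\cal P}_G$. So when we add a vertex $v \in M_{H_j,t}$ and $X$ is the set of predecessors, $|I_{S_n}(X,\{v\})|$ decomposes as (a) the number of predecessors lying in the $H_\ell$'s with $\ell \neq j$ — every such vertex contributes an edge — plus (b) the number of predecessors inside $H_j$ that are neighbors of $v$ in $H_j$. For part (b), by clause $(iii)$ the predecessors of $v$ inside $H_j$ are exactly an initial segment of ${\cal O}_j$ restricted to monotonic sets $\le t$, and by Theorem~\ref{thm1} applied to $H_j$ this segment contributes exactly $\delta_{H_j}$-values along the $t$-th monotonic segment; in particular the within-$H_j$ gain is exactly what optimality for $H_j$ predicts. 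For part (a), clauses $(i)$ and $(ii)$ ensure that by the time we reach the $t$-th monotonic set of the $i=j$ block, we have already inserted \emph{all} vertices of every monotonic set with index $< t$ across all blocks, and all blocks $H_1,\dots,H_{j-1}$ restricted to monotonic set $t$; because $|V_1|\ge\cdots\ge|V_n|$, these counts line up so that the cumulative gain is maximal.

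The heart of the argument is an exchange/optimality lemma: I would show that no set $A$ of the same size as a given initial segment $\F$ can have $|I_{S_n}(A)| > |I_{S_n}(\F)|$. The natural route is to "compress" an arbitrary optimal $A$ toward the shape of an initial segment — first push $A$ to be a union of a clique-block structure, using that each monotonic set induces a clique (Theorem~\ref{thm1}) and that cross-block edges are free, then use Remark~\ref{rem1} to argue that it never hurts to prefer vertices from earlier monotonic sets, and finally use the optimality of each ${\cal O}_i$ within its block together with the size ordering $|V_1|\ge\cdots\ge|V_n|$ to conclude that spreading the "monotonic-set-$t$ quota" across the largest available blocks is optimal. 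This last balancing step — showing that, given a fixed total number of vertices to be drawn from monotonic sets of index exactly $t$ across all blocks, clauses $(i)$–$(iii)$ distribute them to maximize the induced edge count — is where the real work lies; it is essentially a convexity argument on how edges accumulate within a clique (a set of size $q$ inside a clique has $\binom{q}{2}$ internal edges, a convex function of $q$), combined with the fact that cross-block pairs are always edges so only the within-block splitting matters.

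I expect the main obstacle to be exactly that balancing step, i.e.\ proving rigorously that the interleaving prescribed by $(i)$ and $(ii)$ — take a whole monotonic set from every block before moving to the next monotonic index, and within a fixed monotonic index take the blocks in the order of decreasing size — is globally optimal and not just locally greedy. One has to rule out configurations that "trade" a few vertices from a large block's later monotonic set for vertices in a small block, which superficially might look competitive; the convexity of $\binom{q}{2}$ together with the hypothesis $|V_1|\ge\cdots\ge|V_n|$ is what makes this impossible, but turning that intuition into a clean induction (presumably on $n$, peeling off $H_n$, or on the size of the initial segment) is the delicate part. Everything else — the edge decomposition, the clique structure, the reduction to comparing with initial segments — should follow routinely from Theorem~\ref{thm1}, Remark~\ref{rem1}, and Lemma~\ref{lem1}.
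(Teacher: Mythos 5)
There is a genuine gap at exactly the step you yourself single out as the heart of the proof, and the mechanism you propose for it would not work. You claim that, once earlier monotonic indices are exhausted, the remaining question is how to split the ``monotonic-set-$t$ quota'' among the blocks, and that convexity of ${q \choose 2}$ together with $|V_1|\geq\cdots\geq|V_n|$ decides this. But the within-layer splitting is in fact completely irrelevant: by Theorem~\ref{thm1} each $M_{H_i,t}$ is a clique in $H_i$, and cross-block pairs are always edges, so any $Q$ vertices chosen from $\bigcup_i M_{H_i,t}$ induce exactly ${Q \choose 2}$ edges among themselves no matter how they are distributed; moreover, because every $H_i$ is a partial of the \emph{same} graph $G$ (so $\delta_{H_i}$ is a prefix of $\delta_G$, and all layers of index $<t$ in blocks with nonempty $t$-th layer have identical sizes), every vertex of the $t$-th layer has the same number of neighbours among the union of the earlier layers, regardless of its block. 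This prefix property of the partials, which you never invoke, is what makes the layer-$t$ count distribution-free; convexity is a red herring, and your stated worry about ``trading'' vertices between blocks of different sizes is not where the difficulty lies. What actually has to be proved is only that an optimal set never occupies a vertex of a later monotonic index while an earlier index is incomplete, and this is settled by a one-vertex exchange: replace $b$ in the last occupied layer by $a$ in the first deficient layer and compare edge classes using Remark~\ref{rem1} (edges into earlier layers), the clique/``same-degree-into-earlier-layers'' fact above, and the fact that cross-block edges are free.

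For comparison, the paper's proof is an induction on $n$: peel off $H_n$, replace $A\cap V_n$ and $A\cap V_{S_{n-1}}$ by initial segments of the respective optimal orders (cross edges between the two parts are unaffected since every vertex of $S_{n-1}$ is joined to every vertex of $V_n$), and then perform precisely the exchange just described between the first incomplete layer of $H_n$ and the last occupied layer of $S_{n-1}$, the equal-index case being the distribution-free clique observation. Your sketch gestures at an induction ``on $n$, peeling off $H_n$'' but never sets it up, and your opening greedy criterion (checking that each added vertex gains $\delta_{S_n}(|X|+1)$) is circular as stated, since $\delta_{S_n}$ is defined through the optimum $I_{S_n}$ you are trying to determine; it can be dropped, but it does not substitute for the missing exchange argument.
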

 
\begin{proof}
We prove the theorem by induction on $n$.
For $n=1$ it is obviously true. Assume $n\geq 2$ and that the theorem 
is true for all $n'<n$.

Let $H_1\circ \cdots \circ H_n=(V,E)$ and $A\subset V$.
Denote $U=V_n\cap A$ and $D=V_{S_{n-1}}\cap A$.
We transform $A$ by replacing $U$ with $U'={\cal F}_{H_n}(|U|)$ and
$D$ with $D'={\cal F}_{S_{n-1}}(|D|)$. Denote the resulting set by $B$.
Since $|I_{S_{n-1}}(D')|\geq |I_{S_{n-1}}(D)|$ and 
$|I_{H_n}(U')|\geq |I_{H_n}(U)|$ by induction, taking into account that
every vertex of $S_{n-1}$ is connected to all vertices of $V_n$, we conclude
$|I_{S_n}(B)| - |I_{S_n}(A)| \geq 0$.

Let $M_{H_n,q}$ be the first monotonic set such that 
$|U'\cap M_{H_n,q}| < |M_{H_n,q}|$ and 
let $M_{H_p,w}$ be the last monotonic set such that 
$D'\cap M_{H_p,w}\neq \emptyset$.

{\it Case 1:} Assume $q=w$. Theorem \ref{thm1} implies that the set
$T=\cup_{1\leq i\leq n} M_{H_i,q}$ is a clique in $S_n$. Since
$I_{S_n}(\cup_{j<q} M_{H_i,j},\{x\})$ is the same for every $i$ and 
$x\in M_{H_i,q}$, the number of inner edges of $S_n$ will not change if we
replace in $T\cap B$ with any subset of $T$ of the same size. 
Hence, we can transform $B$ into initial segment of order $\cal O$ without 
decreasing the number of its inner edges.

{\it Case 2:} Assume $q<w$, as argument in the case $q>w$ is similar.
Let $a\in M_{H_n,q}\setminus B$ and $b\in M_{H_p,w}\cap B$ for some
$p$. We transform $B$ to the set $C=(B\setminus \{b\})\cup \{a\}$.
Denote $B_1=\cup_{i=1}^{q-1}M_{H_n,i}$ and $B_2=\cup_{i=1}^{q-1}M_{H_p,i}$.
By Remark \ref{rem1} we have $|I_{S_n}(B_1,\{a\})|\geq |I_{S_n}(B_2,\{b\})|$.
Since $|B_1|=|B_2|$ we have $|I_{S_n}(B_1,\{b\})|=|I_{S_n}(B_2,\{a\})|$.
Denote $B_3=\cup_{s=q}^{w}M_{H_p,s}$.
Since $a$ is connected to all vertices of $S_{n-1}$
we have $|I_{S_n}(B_3\cap C,\{b\})|\leq |I_{S_n}(B_3\cap C,\{a\})|$.
Since $M_{H_n,q}$ is a clique and $b$ is connected to all vertices of $H_n$
we have $|I(M_{H_n,q}\cap B, \{a\})|=|I(M_{H_n,q}\cap B, \{b\})|$.
Since $a$ and $b$ are connected to all vertices of $S_n\setminus (H_n\cup
H_p)$ we have $|I_{S_n}(B\setminus (H_n\cup H_p), \{a\})|=
|I_{S_n}(B\setminus (H_n\cup H_p), \{b\})|$.
Hence, $|I_{S_n}(C)|\geq |I_{S_n}(B)|$. This way we can keep moving 
vertices one by one until we get an initial segment of order $\cal O$.
\hfill\qed
\end{proof}

As an immediate application of this result we provide a construction for the
graphs $H_{s,p,i}$ for some sets of parameters.

\begin{tm}\label{thm3}
Let $p\in N$ and $F$ be the set of all factors of $p$.
Then $H_{s,p,i}$ exists for every $i\in F$.
\end{tm}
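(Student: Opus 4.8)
The plan is to obtain $H_{s,p,i}$ as a composition of partials in the sense of Theorem~\ref{thm2}, using the simplest possible base graph, namely a disjoint union of cliques. Since $i\in F$, the number $n=p/i$ is a positive integer; let $G=sK_i$ be the disjoint union of $s$ copies of $K_i$ (so $|V_G|=si$), and define $H_{s,p,i}:=G\circ G\circ\cdots\circ G$ with $H_1=\cdots=H_n=G$ ($n$ factors). First I would check that $G$ admits nested solutions: ordering the vertices of $G$ clique by clique, an initial segment of size $m=ki+r$ with $0\le r<i$ induces $k\cdot i(i-1)/2+r(r-1)/2$ edges, and a routine convexity argument shows this equals $I_G(m)$; hence this order is optimal for $G$, its $\delta$-sequence is $(0,1,\dots,i-1)$ repeated $s$ times, its $t$-th monotonic set $M_{G,t}$ is the $t$-th clique, and $G=P_{G,|V_G|}\in{\cal P}_G$.

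Now all $n$ factors equal $G\in{\cal P}_G$ and $|V_G|=si>0$, so Theorem~\ref{thm2} applies: the order $\cal O$ it describes is optimal for $S_n=G\circ\cdots\circ G$, hence $S_n$ admits nested solutions and $\delta_{S_n}$ is well defined. In that order the monotonic sets appear as $N_1,\dots,N_s$ with $N_t=\bigcup_{j=1}^{n}M_{H_j,t}$, and inside $N_t$ the blocks $M_{H_1,t},M_{H_2,t},\dots,M_{H_n,t}$ occur in this order. It then remains to read off $\delta_{S_n}$ along $\cal O$. When the vertex that is the $b$-th element of the $a$-th block of $N_t$ is appended, its already-placed neighbours split into: $b-1$ inside its own block (a clique $K_i$); $(a-1)i$ in the earlier blocks of $N_t$ (its own factor is completely joined to the other factors); and $(t-1)(n-1)i$ in $N_1\cup\cdots\cup N_{t-1}$, because inside its own factor $sK_i$ it is adjacent to none of the earlier cliques, while it is adjacent to all $i$ vertices of each of the $t-1$ earlier monotonic sets of each of the other $n-1$ factors. (Theorem~\ref{thm1} guarantees in advance that this last count is the same for every vertex of $N_t$.) Consequently the $\delta$-increments along $N_t$ run through the strictly increasing values $(t-1)(n-1)i,\ (t-1)(n-1)i+1,\ \dots,\ (t-1)(n-1)i+(ni-1)$, i.e. $N_t$ carries the block $\{(t-1)(n-1)i,\dots,(t-1)(n-1)i+ni-1\}$ of $\delta_{S_n}$.

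Finally, substituting $p=ni$ turns this block into $\{(t-1)(p-i),\dots,(t-1)(p-i)+p-1\}$, which is exactly the $t$-th block prescribed for $\delta_{H_{s,p,i}}$; one still has to confirm that consecutive blocks do not fuse into one longer monotonic segment, which holds because the first value $(t-1)(n-1)i$ of $N_t$ does not exceed the last value $(t-2)(n-1)i+(ni-1)$ of $N_{t-1}$ --- and this reduces to $i\ge 1$. Thus $S_n$ is a graph with $\delta_{S_n}=\delta_{H_{s,p,i}}$, so such a graph exists for every $i\in F$. The only genuinely delicate point is the bookkeeping in the middle step: pinning down, via Theorem~\ref{thm1}, the correct starting value $(t-1)(n-1)i$ of the $t$-th monotonic segment of $S_n$, since it is precisely this value that produces the prescribed shift $p-i$ (had we built the composition from the graphs $K_{s,\dots,s}$ on $si$ vertices rather than from $sK_i$, the shift would have come out as $1$ instead).
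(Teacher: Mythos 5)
Your proposal is correct and follows essentially the same route as the paper: take $G$ to be the disjoint union of $s$ cliques $K_i$, compose it $p/i$ times, invoke Theorem~\ref{thm2} for the optimal order on the composition, and read off that the resulting $\delta$-sequence is $\delta_{H_{s,p,i}}$. Your explicit count $(t-1)(n-1)i+(a-1)i+(b-1)$ of back-neighbours just fills in the computation the paper leaves as ``taking into account the optimal order for $H$''.
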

\begin{proof}
Let $i\in F$ and
$G=(V, E)$ be a disjoint union of $s$ cliques, each of size $i$.
It is easily shown that $\delta(G) = (\{0,1,\dots,i-1\},\;\{0,1,\dots,i-1\},\;
\dots\;\{0,1,\dots,i-1\})$, where the sequence in braces repeats $i$ times.

Theorem \ref{thm2} implies that the composition  
$H = G\circ\dots\circ G$ ($p/i$ times) admits nested solutions. 
Taking into account the optimal order for $H$ stated in the theorem, 
we conclude $\delta_H=\delta_{H_{s,p,i}}$.
\hfill\qed
\end{proof}

\section{Some auxiliary results}

Let $G = (V, E)$ be a graph admitting an optimal order. We label its 
vertices with $0,1,\dots, |V|-1$ according to that order.
For $A\subseteq V^2$ denote $A_i(a)=\{(x_1,x_2)\in A\bigm| x_i=a\}$. 
We say that $A$ is {\em compressed} if 
$A_i(a) = \{0, 1, . . . , |A_i(a)|-1\}$ for $i = 1,2$ and any $a\in V$.
It is known (see, e.g. \cite{B2}) that if $G$ admits an optimal order then 
there exist compressed optimal sets of $V_G\times V_G$.
If $A\subseteq V^2$ is compressed then
\begin{equation}\label{e1}
  |I(A)| = \sum_{(x,y)\in A} (\delta_G(x) + \delta_G(y)).
\end{equation}
In this and next section we assume $V$ is the vertex set of $H_{s,p,i}$ and 
view the vertices of $V\times V$ as being placed in a matrix $M_{V^2}$. 
The columns and rows of this matrix represent the vertices of $V$ ordered 
in its optimal order from left to right and from bottom to top. 
A set of vertices of $M_{V^2}$ is called {\em downset} if the corresponding 
set of $V\times V$ is compressed.
 
We associate a weight with every vertex $(x,y)\in M_{V^2}$ 
defined by $w(x,y)=\delta_{H_{s,p,i}}(x)+\delta_{H_{s,p,i}}(y)$ and 
for a set $Z\subseteq M_{V^2}$
define $w(Z) = \sum_{(x,y)\in Z} w(x,y)$. This way we can consider a slightly 
more general problem than EIP: for a given $m$ find an $m$-element downset of
$M_{V^2}$ with maximum weight. It is easily seen that the weight of a maximum
weight downset is equal to $I_{H_{s,p,i}}(m)$ if the graph $H_{s,p,i}$ 
with the corresponding $\delta$-sequence exists. However, we abstract 
from existence of the graph and will be dealing with maximization of the 
function $I(\cdot)$ defined by (\ref{e1}) for compressed sets even if the 
graph $H_{s,p,i}$ does not exist, assuming in this case 
that we actually maximize the weight of a downset of $M_{V^2}$. This way we 
prove a minor extension of Theorem \ref{main} for maximum weight downsets 
of $M_{V^2}$.

For $A, B\subseteq V^2$ such that $B$ and $B\cup A$ are compressed and
$A\cap B = \emptyset$ denote
  $$d(A) = |I(B\cup A) \setminus I(B)|.$$
 
\begin{lm}\label{l1}
If $(x, y)\in V^2$ and $(x, y+qp)\in V^2$ then
    $$d(\lbrace (x, y + qp)\rbrace) - d(\lbrace (x, y)\rbrace) = q(p - i)$$
\end{lm}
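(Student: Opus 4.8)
The plan is to compute $d(\{(x,y)\})$ directly from the definition and the formula~(\ref{e1}) for the weight of a compressed set. Since $B\cup A$ is compressed, adding a single new vertex $(x,y)$ to a compressed set $B$ contributes its own weight $w(x,y)=\delta(x)+\delta(y)$ plus, possibly, a correction coming from the ordering of the two coordinates; more precisely, for a compressed $B$ the quantity $d(\{(x,y)\})$ should be expressible purely in terms of $\delta$ evaluated at $x$ and $y$ and at the predecessors forced to lie in $B$. The key point is that $d(\{(x,y)\})-d(\{(x,y+qp)\})$ only involves the $y$-coordinate, since the $x$-coordinate is held fixed, so all $x$-dependent terms cancel and we are left with a purely one-dimensional statement about the $\delta$-sequence of $H_{s,p,i}$.

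First I would isolate that one-dimensional content: the difference $d(\{(x,y+qp)\})-d(\{(x,y)\})$ equals $\big(\delta(y+qp)-\delta(y)\big)$ plus a term accounting for the change in the edges within the $x$-fibre, which in turn is governed by differences of the form $\delta(\text{row index})$ summed over the newly added rows. Then I would invoke the explicit structure of $\delta_{H_{s,p,i}}$: it is the concatenation of $s$ monotonic blocks of length $p$, where block $k$ (for $k=0,\dots,s-1$) runs through the values $k(p-i),k(p-i)+1,\dots,k(p-i)+(p-1)$. The crucial arithmetic fact is that shifting an index by exactly one full block, i.e. by $p$, increases the value of $\delta$ by exactly $p-i$: if $y$ sits in block $k$ at offset $r$ (so $\delta(y)=k(p-i)+r$ with $0\le r\le p-1$), then $y+p$ sits in block $k+1$ at the same offset $r$, giving $\delta(y+p)=(k+1)(p-i)+r=\delta(y)+(p-i)$. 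Iterating $q$ times yields $\delta(y+qp)-\delta(y)=q(p-i)$, provided $y+qp$ is still a legitimate index, which is exactly the hypothesis $(x,y+qp)\in V^2$.

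The main obstacle I anticipate is not the $\delta$-sequence arithmetic, which is essentially the observation above, but rather a careful bookkeeping of what exactly $d(\{(x,y)\})$ counts: one must check that when $B$ is compressed and we append $(x,y)$, the set of "new" edges $I(B\cup\{(x,y)\})\setminus I(B)$ has size $\delta(x)+\delta(y)$ minus a contribution already present, and that this contribution is the \emph{same} for $(x,y)$ and $(x,y+qp)$ up to the shift we are tracking. Concretely, the new edges at $(x,y)$ split into those going to other vertices in column $x$ (governed by $\delta(y)$ and the shape of the downset in that column) and those going to other vertices in row $y$ (governed by $\delta(x)$); only the first kind depends on $y$, and its $y$-dependence is exactly through $\delta$ on the indices $y,y+1,\dots$. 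Once this decomposition is made precise — most cleanly by writing $d(\{(x,y)\})$ as a telescoping expression in $\delta$ and then applying the block-shift identity term by term — the claimed equality $d(\{(x,y+qp)\})-d(\{(x,y)\})=q(p-i)$ falls out. I would close by remarking that the hypotheses $(x,y),(x,y+qp)\in V^2$ are precisely what guarantees both indices lie within the $sp$ entries of $\delta_{H_{s,p,i}}$, so the block-shift identity may be applied $q$ times without running off the end of the sequence.
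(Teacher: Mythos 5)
Your proposal is correct and follows essentially the same route as the paper: by formula (\ref{e1}) the increment $d(\{(x,y)\})$ for a compressed extension is just $\delta(x)+\delta(y)$, the $x$-term cancels in the difference, and the block structure of $\delta_{H_{s,p,i}}$ (i.e.\ $\delta(y)=(y\bmod p)+\lfloor y/p\rfloor(p-i)$) gives $\delta(y+qp)-\delta(y)=q(p-i)$. The paper simply writes this closed form at once, whereas you iterate the one-block shift $q$ times; the bookkeeping about ``corrections'' you worry over is unnecessary since (\ref{e1}) already gives the exact increment.
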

\proof
Denote $t = \lfloor y/p\rfloor$. It can be easily shown that
\begin{eqnarray*}
	d(\lbrace (x, y)\rbrace ) &=& (y \bmod p) + t(p - i).\\
	d(\lbrace (x, y + qp)\rbrace) &=& ((y + qp) \bmod p) + (t + q)(p - i).
\end{eqnarray*}
Hence, we have
\begin{eqnarray*}
	d(\lbrace (x, y + qp)\rbrace) - d(\lbrace (x, y)\rbrace) 
	&=& ((y + qp) \bmod p) + (t + q)(p - i) - (y \bmod p) - t(p - i)\\
	&=& q(p - i).
\end{eqnarray*}
\qed

\begin{lm}\label{l2}
Let $A_1, A_2\subseteq V^2$ such that 
\begin{eqnarray*}
  A_1 &=& \{(x, y)\bigm |  x=a \mbox{ and } b\leq y < b+p \mbox{ for some }
    a, b\}\\
  A_2 &=& \{ (x, y + q)\bigm | (x, y)\in A_1\}
\end{eqnarray*}
one has $d(A_2) - d(A_1) = q(p - i)$.
\end{lm}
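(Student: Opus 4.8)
The plan is to reduce Lemma~\ref{l2} directly to Lemma~\ref{l1} by handling the shift by $q$ in two stages: first shift by the multiple-of-$p$ part, then by the residue part. Write $q = q_0 p + r$ with $0 \le r < p$. The set $A_1$ is a ``vertical block'' of height $p$ sitting in column $x=a$ and occupying rows $b, b+1, \dots, b+p-1$; crucially, since its height is exactly $p$, it occupies exactly one full ``$p$-band'' of rows (i.e.\ $b \equiv 0 \pmod p$ may be assumed after noting the statement only requires \emph{some} $a,b$, but in fact we must argue for arbitrary $b$, so I would not assume this). The key structural fact I would extract is that $d(A_1)$ depends only on how the $p$ consecutive $y$-values of $A_1$ are distributed among the $\delta$-values $\delta_G(b), \dots, \delta_G(b+p-1)$, and that within a single shift the individual contributions $d(\{(x,y)\})$ telescope correctly because $B \cup A_1$ remains compressed at each stage.

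First I would establish the additivity: if we add the points of $A_1$ to a compressed set $B$ one column-segment at a time in increasing $y$-order, then $d(A_1) = \sum_{y=b}^{b+p-1} d(\{(a,y)\})$ where each term is evaluated relative to the compressed set already built. But by formula~(\ref{e1}) for compressed sets, $d$ of a single new point $(a,y)$ equals $\delta_G(a) + \delta_G(y)$ plus the count of already-present neighbors — and since we are adding an entire contiguous vertical block, the ``already-present'' contribution from within the block is exactly the clique/monotonic-segment structure, which is translation-covariant by a multiple of $p$. So the cleanest route is: $d(A_1) = \sum_{y=b}^{b+p-1}\bigl(\delta_G(y) + c(a,y)\bigr)$ where $c(a,y)$ is the number of neighbors of $(a,y)$ among points with smaller index in column $a$ together with the fixed contribution $\delta_G(a)$ repeated, and this ``horizontal'' part $c$ is identical for $A_1$ and for any vertical translate $A_1 + (0,qp)$ because the graph structure in a column repeats with period $p$ (this is exactly the periodicity encoded in the $\delta$-sequence of $H_{s,p,i}$, whose monotonic segments all have length $p$).

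Then I would invoke Lemma~\ref{l1} termwise. Summing Lemma~\ref{l1} over the $p$ points of $A_1$ when $q = q_0 p$ gives $d(A_1 + (0,q_0 p)) - d(A_1) = \sum_{y} q_0(p-i) = p\,q_0(p-i)$ — wait, that over-counts; the correct bookkeeping is that Lemma~\ref{l1} compares a \emph{single} point to its translate, and the ``$d$'' there is relative to a \emph{fixed} baseline $B$, so I would instead apply it with $B$ being the full downset below the block, making $d(A_1) = \sum d(\{(a,y)\})$ an honest sum of single-point $d$'s relative to \emph{that same} $B$; then the difference is $\sum_{y=b}^{b+p-1}\bigl(d(\{(a,y+q)\}) - d(\{(a,y)\})\bigr)$. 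For the shift $q$, each summand by the formula in the proof of Lemma~\ref{l1} equals $\lfloor (y+q)/p\rfloor(p-i) + ((y+q)\bmod p) - \lfloor y/p\rfloor(p-i) - (y\bmod p)$; summing the residue terms $((y+q)\bmod p) - (y\bmod p)$ over a full length-$p$ run of consecutive $y$ telescopes to $0$ (both are permutations of $\{0,\dots,p-1\}$), and summing the $\lfloor \cdot /p\rfloor$ terms: over $p$ consecutive values of $y$ the floor $\lfloor (y+q)/p \rfloor$ takes exactly two consecutive values, say $t$ and $t+1$, with the split point determined by $q \bmod p$, and likewise $\lfloor y/p\rfloor$ is constant $= t_0$; the total is $(p-i)\bigl[(\text{count}) \cdot \text{stuff}\bigr]$ which I expect to collapse to exactly $q(p-i)$ after one line of arithmetic.

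The main obstacle is precisely this last bookkeeping step: keeping the baseline $B$ fixed throughout so that $d$ is additive over the points of $A_1$, and then verifying that the residue contributions cancel while the floor-function contributions sum to $q(p-i)$ rather than to $q_0 p(p-i) + (\text{something})$. I expect no conceptual difficulty — everything follows from the explicit formula $d(\{(x,y)\}) = (y\bmod p) + \lfloor y/p\rfloor(p-i)$ already derived in Lemma~\ref{l1} — but the proof must be careful that the block $A_1$ has height exactly $p$, which is what makes the residues form a complete system and makes the telescoping exact. Once that is in place, $d(A_2) - d(A_1) = \sum_{y=b}^{b+p-1}\bigl((y+q\bmod p) - (y\bmod p)\bigr) + (p-i)\sum_{y=b}^{b+p-1}\bigl(\lfloor(y+q)/p\rfloor - \lfloor y/p\rfloor\bigr) = 0 + (p-i)\cdot q = q(p-i)$, completing the proof. \qed
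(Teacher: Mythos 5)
Your argument is correct, but it takes a somewhat different route from the paper's. The paper proves the lemma by induction on the shift $q$: since the block has height exactly $p$, shifting it up by one row exchanges its bottom point $(x,y)$ for the point $(x,y+p)$, so Lemma~\ref{l1} gives a gain of $p-i$ per unit shift, and iterating gives $q(p-i)$. You instead fix the baseline, use additivity of $d$ over the points of the block together with the explicit singleton formula $d(\lbrace (x,y)\rbrace) = (y \bmod p) + \lfloor y/p\rfloor (p-i)$ from the proof of Lemma~\ref{l1}, and evaluate the difference in closed form: over $p$ consecutive rows both $y \bmod p$ and $(y+q) \bmod p$ run through a complete residue system, so the residue contributions cancel, while $\sum_{y=b}^{b+p-1}\bigl(\lfloor (y+q)/p\rfloor - \lfloor y/p\rfloor\bigr) = q$, yielding $q(p-i)$. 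Both proofs ultimately rest on the same two facts -- additivity of $d$ over the points of the block (which the paper also uses silently when it cancels the common points) and the period-$p$ structure of the $\delta$-sequence -- but yours replaces the induction by a direct summation identity, which is more self-contained at the cost of some floor/residue bookkeeping, whereas the paper's swap argument is shorter once Lemma~\ref{l1} is taken as a black box. One small slip in your intermediate discussion: for arbitrary $b$ the quantity $\lfloor y/p\rfloor$ is not constant on the block (it can take two values), but this is harmless because the identity you actually use, $\sum_{y=b}^{b+p-1}\bigl(\lfloor (y+q)/p\rfloor - \lfloor y/p\rfloor\bigr) = q$, holds for every $b$, e.g.\ since $\lfloor (y+q)/p\rfloor - \lfloor y/p\rfloor = \bigl(q + (y\bmod p) - ((y+q)\bmod p)\bigr)/p$ and the residue terms cancel when summed over a complete system.
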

\proof
In other words, $A_1$ is a subset of consecutive vertices in a column of
$M_{V^2}$ and $A_2$ is a shift of $A_1$ on $q$ rows up.

We prove the lemma by induction on $q$. For $q=1$ Lemma \ref{l1} implies
\begin{eqnarray*}
	d(A_2) - d(A_1) 
	&=& d(\lbrace (x, y + p) \rbrace) - d(\lbrace (x, y) \rbrace)\\
	&=& p - i.
\end{eqnarray*}
Assume the statement holds for all $q'\leq q$. By induction we have
\begin{eqnarray*}
	d(A_2) - d(A_1) 
	&=& q(p - i) + d(\lbrace (x, y + q) \rbrace) - 
              d(\lbrace(x, y + q + p \rbrace)\\
	&=& q(p - i) + p - i\\
	&=& (q + 1)(p - i)
\end{eqnarray*}
\qed

\begin{lm}\label{l3}
Let $A_1, A_2 \subseteq V^2$ such that
\begin{eqnarray*}
  A_1 &=& \{ (x, y)\bigm | u\leq x < u + p \mbox{ for some } u\geq y 
         \mbox{ and fixed } y\}\\
  A_2 &=& \{ (y+r, x-d)\bigm | (x, y)\in A_1\mbox{ for some } 0\leq r < 
\lceil x/p\rceil p - x \}.
\end{eqnarray*}
Then $d(A_2) - d(A_1) = rp - d(p - i)$.
\end{lm}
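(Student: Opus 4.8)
The plan is to bypass the geometry of $A_1$ and $A_2$ and to compute $d(A_1)$ and $d(A_2)$ directly from the $\delta$-sequence, in the spirit of Lemmas \ref{l1} and \ref{l2}. Whenever $B$ and $B\cup A$ are compressed with $A\cap B=\emptyset$ we have $I(B)\subseteq I(B\cup A)$, so $d(A)=|I(B\cup A)|-|I(B)|$, and by (\ref{e1}) this equals $\sum_{(x,y)\in A}\bigl(\delta_{H_{s,p,i}}(x)+\delta_{H_{s,p,i}}(y)\bigr)$ regardless of $B$; in particular $d$ is unchanged by transposing the two coordinates, which explains why $d(A_2)-d(A_1)$ ought to depend only on the shift parameters $r$ and $d$. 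Since $A_1$ is the run of $p$ consecutive cells $(u,y),(u+1,y),\dots,(u+p-1,y)$ of row $y$ and $A_2$ is the run of $p$ consecutive cells $(y+r,u-d),(y+r,u+1-d),\dots,(y+r,u+p-1-d)$ of column $y+r$, I get
$$d(A_1)=p\,\delta(y)+\sum_{j=0}^{p-1}\delta(u+j),\qquad d(A_2)=p\,\delta(y+r)+\sum_{j=0}^{p-1}\delta(u-d+j).$$

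The core of the argument is the identity $\sum_{j=0}^{p-1}\delta_{H_{s,p,i}}(u+j)=(p-i)\,u+p(p-1)/2$, valid for $0\le u\le|V|-p$. I would prove it by writing $u=kp+m$ with $0\le m<p$: the $p$ consecutive indices $u,\dots,u+p-1$ realize every residue modulo $p$ exactly once, and $p-m$ of them lie in the $k$-th monotonic segment while $m$ lie in the $(k+1)$-st; since $\delta_{H_{s,p,i}}$ on its $t$-th monotonic segment takes the values $t(p-i),t(p-i)+1,\dots,t(p-i)+p-1$, summing the ``position'' parts gives $0+1+\cdots+(p-1)=p(p-1)/2$ and summing the ``segment'' parts gives $(p-i)\bigl((p-m)k+m(k+1)\bigr)=(p-i)u$. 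Substituting into the two expressions above, the constant $p(p-1)/2$ cancels and the segment contributions telescope to
$$d(A_2)-d(A_1)=p\bigl(\delta(y+r)-\delta(y)\bigr)+(p-i)(u-d)-(p-i)u=p\bigl(\delta(y+r)-\delta(y)\bigr)-d(p-i).$$

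To finish, I would invoke the hypothesis on $r$, which confines $y+r$ to the same monotonic segment as $y$: inside a monotonic segment $\delta_{H_{s,p,i}}$ increases in unit steps (Lemma \ref{lem1}), so $\delta(y+r)-\delta(y)=r$ and the right-hand side becomes $rp-d(p-i)$, as claimed. I expect the only genuine obstacle to be the counting identity for $\sum_{j=0}^{p-1}\delta(u+j)$ --- once it is in place, the rest telescopes --- together with a routine check that the compressed sets $B$ underlying $d(A_1)$ and $d(A_2)$ really exist, which is where the conditions $u\ge y$ and (implicitly) $u\ge d$ are used, so that formula (\ref{e1}) applies.
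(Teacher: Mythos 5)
Your proof is correct, but it takes a genuinely different route from the paper's. The paper factors the passage from $A_1$ to $A_2$ into three elementary moves: a reflection about the main diagonal (weight-preserving, which is where $u\ge y$ enters), a downward shift by $d$ rows of a length-$p$ column segment, whose cost $-d(p-i)$ is exactly Lemma \ref{l2}, and a rightward shift by $r$ columns, asserted without further argument to gain $rp$. You instead evaluate $d(A_1)$ and $d(A_2)$ in closed form from (\ref{e1}), using the identity $\sum_{j=0}^{p-1}\delta(u+j)=(p-i)u+p(p-1)/2$, which is correct because $\delta_{H_{s,p,i}}(v)=(v\bmod p)+\lfloor v/p\rfloor(p-i)$ and $p$ consecutive indices realize every residue exactly once. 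This buys two things: Lemmas \ref{l1} and \ref{l2} are bypassed entirely, and it becomes transparent that no block-alignment condition is needed in the row direction (the contribution is linear in $u$, so the constant cancels), while one \emph{is} needed in the column direction: you need $\delta(y+r)=\delta(y)+r$, i.e.\ $y$ and $y+r$ in the same monotonic segment. That is precisely the condition the paper's unproved step $d(A_2)-d(S)=rp$ also requires, and it is the only sensible reading of the (garbled, $x$-dependent) constraint on $r$ in the statement; it is also what holds in the lemma's applications in Cases 2c, 2d and 2f, where $y$ and $y+r$ both lie in $[0,p)$. Two small nits: citing Lemma \ref{lem1} for the unit increments is not quite the right reference (it only bounds increments above by $1$); for $H_{s,p,i}$ the unit steps inside a segment hold by definition of its $\delta$-sequence. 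And, as you note yourself, $d$ is only defined when a suitable compressed $B$ exists and the cells stay in range ($u\ge d$, $u+p\le sp$), a point the paper glosses over as well.
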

\proof
The condition $u\geq y$ provides that $A_1$ is below the main diagonal of
$M_{V^2}$.
Let $R$ be the {\em reflections} of $A_1$ about the main diagonal and $S$ 
be its {\em shift down} on $d$ rows. Formally,
\begin{eqnarray*}
R &=& \lbrace (y, x)\bigm | (x, y)\in A_1\rbrace,\\
S &=& \lbrace (x, y-d)\bigm | (x, y)\in R\rbrace.
\end{eqnarray*}
Then $A_2$ is a {\em horizontal shift} of $S$ right on $r$ columns.
We have $d(R) - d(A_1) = 0$ and $d(R) - d(S)=d(p-i)$ by Lemma \ref{l2}. 
Moreover, $d(A_2) - d(S) = rp$. Therefore,
\begin{eqnarray*}
d(A_2) - d(A_1)
&=& d(A_2) - d(S) + d(S) - d(R)\\
&=& d(A_2) - d(S) - (d(R) - d(S))\\
&=& rp - d(p - i).
\end{eqnarray*}
\qed

\begin{lm}\label{l4}
For a fixed $a\geq 1$ and sets $A=\{(x,y)\bigm| x=a\}\subseteq V^2$ and 
$B=\{(x-1,y)\bigm |(x,y)\in A\}$ one has $d(B)-d(A)\geq -|B|=-|A|$.
\end{lm}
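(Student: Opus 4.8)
The plan is to evaluate $d(A)$ and $d(B)$ explicitly and then reduce everything to the one--step estimate of Lemma~\ref{lem1}. Since $A=\{(a,y)\mid y\in V\}$ and $B=\{(a-1,y)\mid y\in V\}$ are full columns of $M_{V^2}$, the requirement that $A$, together with its background, be compressed forces that background to consist of all preceding columns, completely filled (and similarly for $B$); in particular $|A|=|B|=|V|$. Applying identity~(\ref{e1}) to these compressed sets one obtains
$d(A)=\sum_{y\in V}\bigl(\delta_{H_{s,p,i}}(a)+\delta_{H_{s,p,i}}(y)\bigr)=|V|\,\delta_{H_{s,p,i}}(a)+I_{H_{s,p,i}}(|V|)$,
and likewise $d(B)=|V|\,\delta_{H_{s,p,i}}(a-1)+I_{H_{s,p,i}}(|V|)$. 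The ``intra--column'' term $I_{H_{s,p,i}}(|V|)=\sum_{y\in V}\delta_{H_{s,p,i}}(y)$ is the same for both columns and therefore drops out of the difference.

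Carrying this out gives $d(B)-d(A)=|V|\bigl(\delta_{H_{s,p,i}}(a-1)-\delta_{H_{s,p,i}}(a)\bigr)$. By Lemma~\ref{lem1} --- applied to the $\delta$--sequence in question, whose consecutive increments are at most $1$ regardless of whether the graph $H_{s,p,i}$ itself exists (they equal $+1$ inside a monotonic segment and $1-i\le 0$ across a segment boundary) --- we have $\delta_{H_{s,p,i}}(a)-\delta_{H_{s,p,i}}(a-1)\le 1$. Hence $d(B)-d(A)\ge -|V|=-|A|=-|B|$, which is the assertion; the bound is attained exactly when $a$ lies in the interior of a monotonic segment (there $\delta_{H_{s,p,i}}(a)=\delta_{H_{s,p,i}}(a-1)+1$), while at a segment boundary the difference is in fact nonnegative.

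There is no genuine obstacle in this argument; the only points requiring a moment of care are that $d$ is well defined for the sets $A$ and $B$ (their backgrounds being uniquely determined, as noted above, so that (\ref{e1}) applies) and that the appeal to the increment bound is legitimate in the purely numeric setting we work in --- which is immediate from the explicit form of $\delta_{H_{s,p,i}}$.
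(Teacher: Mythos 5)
Your proof is correct and takes essentially the same route as the paper: both reduce $d(B)-d(A)$ to $|A|\bigl(\delta(a-1)-\delta(a)\bigr)$ and evaluate this from the explicit $\delta$-sequence of $H_{s,p,i}$, the paper merely splitting into the cases $a\not\equiv 0\pmod p$ and $a= wp$ (giving $-|A|$ and $|A|(i-1)\geq 0$ respectively), which is exactly your unified increment bound $\delta(a)-\delta(a-1)\leq 1$. Your precaution of justifying that bound directly from the numeric form of $\delta_{H_{s,p,i}}$ rather than from Lemma~\ref{lem1} (since the graph need not exist) is appropriate and consistent with the paper's purely computational treatment.
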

\proof
If $a\neq 0\bmod p$ then 
$$d(B) - d(A)=-|B|=-|A|.$$
Otherwise, if $a = wp$ for some $w\in \{1,\dots,s - 1\}$ then 
\begin{eqnarray*}
	d(B) - d(A)
	&=& |B|(p - 1 + (w - 1)(p - i)) 
	- |B|w(p - i)\\
	&=& |B|(p - 1 - p + i)\\
	&=& |B|(i - 1)\\
	&\geq & 0.
\end{eqnarray*}
So we have $d(B)-d(A)\geq -|B|=-|A|$ as desired.
\qed

\begin{tm}\label{tm2}
{\rm (Bezrukov, Els\"{a}sser \cite{BE})}
If $p \geq 3$ and $1 \leq i \leq p-i$ then the lexicographic order is optimal
for $H^2_{2,p,i}$.
\end{tm}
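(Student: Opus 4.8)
The plan is to reduce the statement to a weighted extremal problem about downsets of $M_{V^2}$ and then to run a compression argument assembled from Lemmas \ref{l1}--\ref{l4}. Since $s=2$, the $\delta$-sequence of $H:=H_{2,p,i}$ satisfies $\delta_H(x)=x$ for $0\le x<p$ and $\delta_H(x)=x-i$ for $p\le x<2p$, so it is strictly increasing except for one downward jump of height $i-1$ between indices $p-1$ and $p$, and it is symmetric. By (\ref{e1}) the number of internal edges of a compressed set equals its weight, whence $d(A)=w(A)$ for every compressed $A\subseteq V^2$; consequently the theorem is equivalent to the assertion that, for every $m$, the lexicographic initial segment $\L(m)$ has the largest weight among all $m$-element downsets of $M_{V^2}$ (here $|V|=2p$). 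The set $\L(m)$ is precisely the downset obtained by filling the columns of $M_{V^2}$ from left to right, each column completely before the next is begun; if $m=2pk+r$ with $0\le r<2p$ then $\L(m)$ consists of the full columns $0,\dots,k-1$ together with the bottom $r$ cells of column $k$.

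To prove this I would start from an arbitrary $m$-element downset $A$ and transform it into $\L(m)$ by a sequence of elementary moves, each of which keeps the set a downset of size $m$ and does not decrease its weight; this immediately gives $w(A)\le w(\L(m))$. The admissible moves, with their weight effects, are: (a) reflecting a sub-configuration across the main diagonal of $M_{V^2}$, which is weight-neutral because $\delta_H$ is symmetric (the identity $d(R)-d(A_1)=0$ in the proof of Lemma \ref{l3}); (b) lifting a single cell, or a vertical run of $p$ cells, upward, changing the weight by $q(p-i)\ge 0$ (Lemmas \ref{l1} and \ref{l2}); (c) the combined reflect-and-translate move of Lemma \ref{l3}, with weight change $rp-d(p-i)$; and (d) shifting a full column one step to the left, with weight change $-|A|$ in general but $|A|(i-1)\ge 0$ when the column crosses the seam at index $p$ (Lemma \ref{l4}). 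The point is to choose, at each stage, a move --- or a short combination of moves --- that strictly decreases a monovariant measuring the discrepancy between $A$ and $\L(m)$ (for instance the index of the rightmost nonempty column, refined by its occupancy) while keeping the total weight change nonnegative; then the process reaches $\L(m)$ in finitely many steps.

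I expect the main difficulty to be the sign book-keeping for these moves, which is exactly where the hypotheses $p\ge 3$ and $1\le i\le p-i$ are consumed. Because $\delta_H$ increments by $1$ everywhere except for the drop at the seam, a block that straddles row $p$ or column $p$ must be analysed separately from one that does not; the condition $i\ge 1$ makes the seam a genuine drop, so that the move (d) yields a gain there, while $2i\le p$, equivalently $p-i\ge i$, is what makes the lift gain $q(p-i)$ in (b) large enough to dominate the competing terms --- in particular to keep $rp-d(p-i)\ge 0$ in (c) and to absorb the cost of any leftward column shift that must be paired with it --- and $p\ge 3$ rules out small degenerate configurations. A last point to dispose of is the terminal case, in which $A$ and $\L(m)$ already coincide except on a set of cells that spans a clique in $H^2$ (by Theorem \ref{thm1}) and may therefore be permuted freely; this is the analogue of the Case 1 / Case 2 dichotomy in the proof of Theorem \ref{thm2}. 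Finally, since $H_{2,p,i}$ is the complete graph $K_{2p}$ with $i$ disjoint perfect matchings deleted, the statement is exactly the theorem of Bezrukov and Els\"asser \cite{BE} transcribed into the present notation, and the argument sketched here follows the lines of theirs.
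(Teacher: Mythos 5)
You should note at the outset that the paper contains no proof of Theorem \ref{tm2}: it is quoted verbatim from Bezrukov and Els\"asser \cite{BE} and is used only as the base case $s=2$ of the induction in the proof of Theorem \ref{main}. So there is no internal argument to compare yours with; the only question is whether your sketch would stand on its own as a proof of the $s=2$ statement, and as written it would not.

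Your reduction to maximizing the weight of an $m$-element downset of $M_{V^2}$, and your inventory of moves (a)--(d) with the weight effects taken from Lemmas \ref{l1}--\ref{l4}, are the right toolkit, but the central claim --- that at each stage one can pick a move, or a short combination of moves, preserving the downset property and the cardinality, strictly decreasing some monovariant, and never decreasing the weight --- is exactly the content of the theorem, and you explicitly defer it (``I expect the main difficulty to be the sign book-keeping''). The moves are not individually sign-definite: Lemma \ref{l3} gives $rp-d(p-i)$, which can be negative, and Lemma \ref{l4} costs $-|A|$ whenever the column does not cross the seam; moreover the quantities $d(\cdot)$ in those lemmas are marginal weights of blocks adjoined to a compressed set $B$ with $B\cup A$ compressed, so they do not license arbitrary internal translations or reflections of sub-configurations --- only boundary exchanges that keep the set compressed. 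Deciding which exchanges are admissible and showing the gains dominate the losses depends on where the boundary of the set sits relative to the seam at index $p$, and settling this is precisely the multi-case analysis (with parameters of the type $k_2,k_3,\gamma_3,\gamma_5$ and Cases 1, 2a--2f, 3) that Section 4 of this paper carries out for general $s$ and that \cite{BE} carries out for $s=2$. Without executing that analysis your text is a plan, not a proof; its only complete justification is the closing appeal to \cite{BE}, which is in effect what the paper itself does by stating the theorem with attribution rather than proving it.
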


We use Theorem \ref{tm2} and some parts of its proof in the proof of our 
main result. 

\begin{lm}{\rm (see, e.g., \cite{BE})}\label{l5}
Let $G=(V_G,E_G)$ be a regular graph and $A\subseteq V_G$. Then $A$ is an 
optimal set iff $\overline{A}=V_G\setminus A$ is optimal.
\end{lm}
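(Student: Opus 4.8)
The plan is to show that for a $d$-regular graph $G$ on $p=|V_G|$ vertices, the difference $|I_G(A)|-|I_G(\overline A)|$ depends only on $|A|$ and the fixed quantities $d,p$, not on the particular choice of $A$; the lemma then follows at once.

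First I would fix $m$ and a set $A\subseteq V_G$ with $|A|=m$, and double-count the $dm$ edge-endpoints incident to vertices of $A$. Each edge inside $A$ contributes two such endpoints, and each edge of the cut $I_G(A,\overline A)$ contributes exactly one, so $dm=2|I_G(A)|+|I_G(A,\overline A)|$. Applying the identical counting to $\overline A$, which has $p-m$ vertices, and observing that $I_G(A,\overline A)$ and $I_G(\overline A,A)$ are literally the same set of edges, we get $d(p-m)=2|I_G(\overline A)|+|I_G(A,\overline A)|$. Subtracting the two identities eliminates the cut term and yields
$$|I_G(A)|-|I_G(\overline A)|=\frac{d}{2}\,(2m-p),$$
a constant that does not depend on $A$.

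Finally I would conclude: since $|I_G(\overline A)|=|I_G(A)|-\frac{d}{2}(2m-p)$ for every $m$-element set $A$, the set $A$ maximizes $|I_G(\cdot)|$ over $m$-element subsets of $V_G$ if and only if $\overline A$ maximizes $|I_G(\cdot)|$ over $(p-m)$-element subsets; equivalently, $A$ is optimal iff $\overline A$ is optimal. There is essentially no obstacle here — the only point requiring care is the observation that the boundary (cut) term is the same for $A$ and for $\overline A$, so that it cancels cleanly, and that regularity is precisely what makes the total endpoint count equal $dm$ rather than a quantity depending on the internal structure of $A$.
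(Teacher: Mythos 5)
Your double-counting argument is correct: regularity gives $dm=2|I_G(A)|+|I_G(A,\overline A)|$ and $d(p-m)=2|I_G(\overline A)|+|I_G(A,\overline A)|$, so the difference $|I_G(A)|-|I_G(\overline A)|=\tfrac{d}{2}(2m-p)$ is a constant depending only on $m$, and optimality transfers between $A$ and $\overline A$ exactly as you conclude. This is essentially the same standard argument as in the cited source \cite{BE}, which the paper itself relies on without reproving the lemma.
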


This lemma implies that that regardless which $i$ perfect 
matchings are removed from $K_{2p}$ to obtain $H_{2,p,i}$, the resulting
graph admits an optimal order. Note that Theorem \ref{tm2} 
in \cite{BE} deals with a specific removal of $i$ perfect matchings. However,
it remains valid for removal of any $i$ disjoint perfect matchings.

\section{Proof of Theorem \ref{main}}

Let $V=V_{H_{s,p,i}}$ and $A\subseteq V^2$ be a compressed set. We also 
assume that $A$ is stable under reflection about the main diagonal of
$M_{V^2}$. That is, the condition $(x,y)\in A$ implies $(y,x)\in A$.
For $1\leq k,l\leq s$ and $A\subseteq V^2$ denote 
\begin{eqnarray*}
	V_{k,l} &=& \{(u,v)\bigm| (k-1)p\leq u\leq kp-1,\; (l-1)p\leq v\leq lp-1\}\\
	A_{k,l} &=& A\cap V_{k,l}
\end{eqnarray*}

We prove the theorem by induction on $s$. The base case $s=2$ is proved in
\cite{BE}. Assume $s\geq 3$ and the theorem holds for all $s'<s$.

{\it Case 1.} Assume 
$A\subseteq \bigcup^{s-1}_{k=1} \left( \bigcup^s_{l=1} V_{k,l}\right)$. 
Let $A', A'' \subseteq A$ such that 
$A' = \bigcup^{s-1}_{k=1} \left( \bigcup^{s-1}_{l=1} V_{k,l}\right)$ and\\
$A'' = \bigcup^{s-1}_{k=1} \left( \bigcup^{s}_{l=2} V_{k,l}\right)$.
We transform $A$ by replacing $A'$ with $\F_{A'}^2(|A'|)$ and $A''$ with 
$\F_{A''}^2(|A''|)$. It is easily seen that the resulting set $B$ will be 
compressed 
and optimal. Moreover, the sum of the lexicographic numbers of vertices of 
$B$ is less than the one for $A$. Therefore, after a finite number of 
repetitions of such transformation we obtain a set $B$ which is stable under
this transformation. For $B$ denote
\begin{eqnarray*}
	k_1 &=& \max_{(q, 0)\in B} q,\\
	k_2 &=& \max_{(q, sp - 1)\in B} q + 1,\\
	y_1 &=& \max_{(k_1, q)\in B} q,\\
	y_2 &=& \max_{(k_2, q)\in B} q.
\end{eqnarray*}
Taking into account that $B$ is compressed, we have $k_2 \leq k_1\leq k_2+1$.
If $k_1 = k_2$ then $y_1 = y_2$, hence $B={\cal F}_V^2(|B|)$. Therefore,
without loss of generality we assume $k_1=k_2+1$ and $y_1 < p$. We have 
$y_2 \bmod p\geq y_1$. Indeed, if this is not the case then we transform 
$B$ to $C$ by replacing $B'=\{(k_1,y)\bigm |y_2\bmod p<y\leq y_1\}\subseteq B$ 
with $C'=\{(k_2,y)\bigm |y_2<y\leq y_1+(s-1)p\}\subseteq V^2\setminus B$. 
By taking into account Lemma \ref{l4} and Lemma \ref{l1} we have 
$|I(C)|-|I(B)|\geq |B'|(s-1)(p-i)-|B'|=|B'|((s-1)(p-i)-1)>0$, which contradicts 
the optimality of $B$.

If $y_2 \bmod p\geq y_1$ and $sp-y_2-1> y_1+1$ we transform $B$ to $D$ by 
replacing $B'=\{(k_1,y)\bigm |0\leq y\leq y_1\}\subseteq B$ with 
$D'=\{(k_2,y)\bigm |y_2<y\leq y_2+y_1+1\}\subseteq V^2\setminus B$. 
By Lemmas \ref{l4} and \ref{l1} we have 
$|I(D)|-|I(B)|> |B'|(s-1)(p-i)-|B'|=|B'|((s-1)(p-i)-1)>0$, which again 
contradicts the optimality of $B$.

\begin{figure}[htb]
\begin{center}\unitlength=0.8mm \begin{picture}(105,105)\thicklines
\definecolor{light}{rgb}{.9,.9,.9}
\definecolor{dark}{rgb}{.7,.7,.7}
\put(0,1){\colorbox{light}{\makebox(45,102.5)[ss]{}} }
\put(47,1){\colorbox{light}{\makebox(2.5,90)[ss]{}} }
\put(52,1.5){\colorbox{dark}{\makebox(2.5,5)[ss]{}} }
\put(0,0){\line(1,0){105}} \put(0,0){\line(0,1){105}}
\put(105,105){\line(-1,0){105}} \put(105,105){\line(0,-1){105}}
{\thinlines
\put(35,0){\line(0,1){105}} \put(70,0){\line(0,1){105}}
\put(0,35){\line(1,0){105}} \put(0,70){\line(1,0){105}}
\put(52,92.5){\line(0,1){6.5}}
\put(47.5,92.5){\line(0,1){6.5}}
\put(47.5,99){\line(1,0){4.5}}
\put(47.5,92.5){\line(1,0){4.5}}

\put(55,5){\line(0,1){91}}
\put(55,96){\vector(-1,0){6}}
}
\put(60,8){\makebox(0,0)[cc]{$y_1$}}
\put(60,93){\makebox(0,0)[cc]{$y_2$}}
\end{picture}
\caption{Transforming set $B$ into $D$ in {\it Case 1}}\label{f2}
\end{center}
\end{figure}
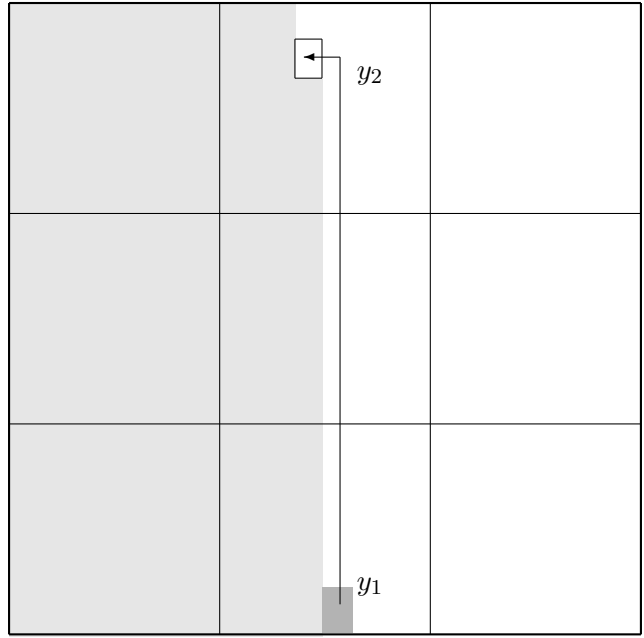

If $y_2 \bmod p\geq y_1$ and $sp-y_2-1\leq y_1+1$ we transform $B$ to $H$ by 
replacing $B'=\{(k_1,y)\bigm |y_1-(sp-y_2-1)< y\leq y_1\}$ with 
$H'=\{(k_2,y)\bigm |y_2<y<sp\}$. By Lemmas \ref{l4} and \ref{l1} we have 
$|I(H)|-|I(B)|> |B'|(s-1)(p-i)-|B'|=|B'|((s-1)(p-i)-1)>0$, which still 
contradicts the optimality of $B$.

{\it Case 2.} Assume 
$A \subseteq V^2$ and $A_{s,1}\neq\emptyset$
Denote
\begin{eqnarray*}
V_h &=& \bigcup_{k=1}^{s-1} ( \bigcup_{l=2}^s V_{k,l}) ,\quad  
   A_h =A\cap V_h\\
V_2 &=& \bigcup_{l=2}^s V_{1,l} ,\quad  A_2 = A\cap V_2\\
V_3 &=& \bigcup_{k=2}^s V_{k,1} ,\quad  A_3 = A\cap V_3.
\end{eqnarray*}

By inductive hypothesis we can assume that $A_h$ is an initial segment 
in ``columns'' of $V_h$, and also $A_3$ is an in initial segment in ``rows'' 
of $V_3$. Without loss of generality we can assume that $A_h = A_2$, 
since otherwise we can apply arguments similar to the one of {\it Case 1} 
to further transform the set to $\F_V^2(|A|)$. After given transformation 
the number of inner edges between $A_{1,1}$ and $A_2$ and between 
$A_{1,1}$ and $A_3$ 
does not decrease. Let 
\begin{eqnarray*}
  |A_2| &=& k_2(s-1)p + \gamma_2\\
  |A_3| &=& k_3(s-1)p + \gamma_3
\end{eqnarray*}
with $0 \leq \gamma_2,\gamma_3 < (s-1)p$. From the arguments above $k_2<p$.
Since $A$ is stable under reflection, we have $k_2\geq k_3$.

Let $\epsilon = 0$ if $\gamma_3 > 0$ and $\epsilon = 1$ if $\gamma_3 = 0$. 
Let $V_5 = \{(x,y) \in V_{1,1} | k_2 \leq x < p, k_3 + \epsilon \leq y < p \}$. 
Replace $A_5 = A \cap V_5$ with the initial segment of $V_5$. It is easily
seen that the resulting set remains optimal.
Let $|A_5|=(p-k_3-\epsilon)k_5 + \gamma_5$ with $0\leq\gamma_5 
< p-k_3-\epsilon +1$.

\begin{figure}[H]
\begin{center}\unitlength=0.8mm \begin{picture}(105,105)\thicklines
\definecolor{light}{rgb}{.9,.9,.9}
\definecolor{dark}{rgb}{.7,.7,.7}
\put(0,1){\colorbox{light}{\makebox(9,102.5)[ss]{}} }
\put(11,1){\colorbox{light}{\makebox(2.4,80)[ss]{}} }
\put(16,16){\colorbox{dark}{\makebox(9,18)[ss]{}} }
\put(26,16){\colorbox{dark}{\makebox(3,10)[ss]{}} }
\put(0,1){\colorbox{light}{\makebox(102.5,9)[ss]{}} }
\put(0,11){\colorbox{light}{\makebox(50,2.5)[ss]{}} }
\put(0,0){\line(1,0){105}} \put(0,0){\line(0,1){105}}
\put(105,105){\line(-1,0){105}} \put(105,105){\line(0,-1){105}}
{\thinlines
\put(35,0){\line(0,1){105}} \put(70,0){\line(0,1){105}}
\put(0,35){\line(1,0){105}} \put(0,70){\line(1,0){105}}
\put(16,15){\line(0,1){20}} \put(16,15){\line(1,0){19}} }
\put(0,95){\vector(1,0){11}} \put(11,95){\vector(-1,0){11}}
\put(16,30){\vector(1,0){11}} \put(27,30){\vector(-1,0){11}}
\put(35,13){\vector(1,0){17}} \put(52,13){\vector(-1,0){17}}
\put(13,35){\vector(0,1){47}} \put(13,82){\vector(0,-1){47}}
\put(30,15){\vector(0,1){12}} \put(30,27){\vector(0,-1){12}}
\put(90,0){\vector(0,1){11}} \put(90,11){\vector(0,-1){11}}
\put(6,98){\makebox(0,0)[cc]{$k_2$}}
\put(21,33){\makebox(0,0)[cc]{$k_5$}}
\put(95,6){\makebox(0,0)[cc]{$k_3$}}
\put(10,55){\makebox(0,0)[cc]{$\gamma_2$}}
\put(43,10){\makebox(0,0)[cc]{$\gamma_3$}}
\put(27,20){\makebox(0,0)[cc]{$\gamma_5$}}
\end{picture}
\caption{Notations used for {\it Case 2}}\label{f3}
\end{center}
\end{figure}
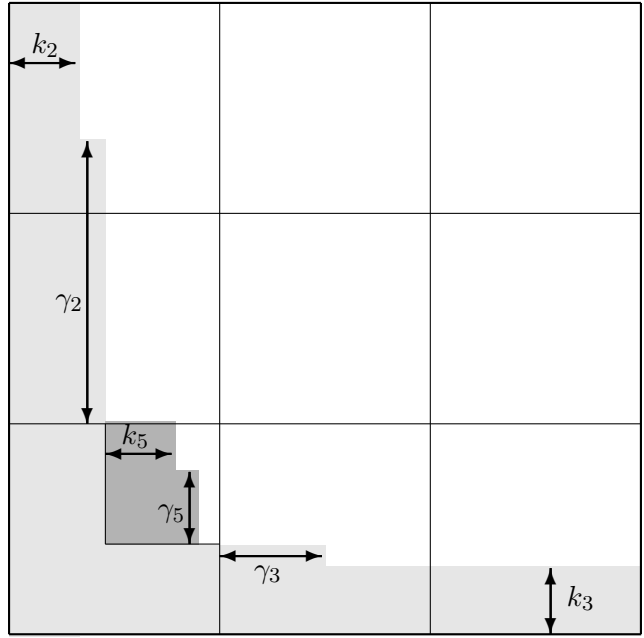

{\it Case 2a.}
We show that $\gamma_2 = 0$.

If $k_3 > 0$ and $\gamma_3 > 0$, we first exchange the sets $A'=\{(x,0)\bigm|
p+ \gamma_3 \leq x < sp\}$ and $B'=\{(x,k_3)\bigm|(x,0)\in A'\}$ and
transform $A$ to a resulting set $B$ such that $|I(B)|=|I(A)|$. 
If $k_3 > 0$ and $\gamma_3 = 0$, we transform $B$ to a resulting set $C$ by 
exchanging $B'=\{(x,k_3-\epsilon)\bigm| p+\gamma_2 \leq x < sp\}$ with 
$C'=\{(k_2,y)\bigm | (y,k_3-\epsilon)\in B'\}$. Since $B'$ and $C'$ are
isomorphic segments in $V$ and  $k_2\geq k_3$ we have that $|I(C)|-|I(B)|>0$. 
Hence we have a contradiction with the optimality of $C$.

If $k_3 = 0$  we have that $\gamma_2 \geq \gamma_3$. If it is not the case
then we transform $A$ to $D$ by exchanging $A'=\{(x,0)\bigm | p+\gamma_2 \leq
x < p + \gamma_3\}$ with $D'=\{(k_2,y)\bigm | (y,0)\in A'\}$. Since $A'$ and
$D'$ represent isomorphic segments in $V$ and  $k_2\geq k_3$ we have that $|I(D)|
- |I(A)| \geq 0$. Hence the set $D$ is optimal. So we can assume $\gamma_2 \geq \gamma_3$.

If $k_3 = 0$, $\gamma_2 \geq \gamma_3$, $\gamma_3>(s-2)p$ and $\gamma_3
-(s-2)p\leq (s-1)p -\gamma_2$ we transform $A$ to resulting set $H$ by
replacing $A'=\{(0,b)\bigm |(s-1)p\leq a<\gamma_3\}$ with $H'=\{(b,k_2)\bigm
|\gamma_2 \leq b<p+\gamma_2 +\gamma_3 -(s-2)p\}$. We have 
$|I(H)|-|I(A)|>k_2(\gamma_3-(s-2)p)>0$. Now we can apply arguments of 
{\it Case 1} on H.

If $k_3 = 0$, $\gamma_2 \geq \gamma_3$, $\gamma_3>(s-2)p$ and $\gamma_3 -(s-2)p >(s-1)p -\gamma_2$ we transform $A$ to resulting set $K$ by replacing $A'=\{(0,b)\bigm |\gamma_3 -((s-1)p -\gamma_2
) \leq a<\gamma_3\}$ with $K'=\{(b,k_2)\bigm | \gamma_2 \leq b< sp\}$. We have
  $$|I(K)|-|I(A)| > ((s-1)p - \gamma_2)k_2 \geq 0.$$
Hence, we have a contradiction with the optimality of $A$, which implies
$\gamma_2 =0$.

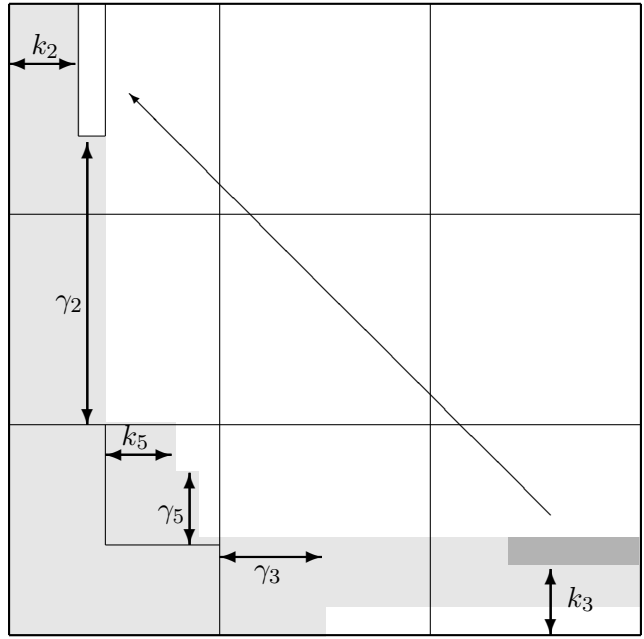
\begin{figure}[H]
\begin{center}\unitlength=0.8mm \begin{picture}(105,105)\thicklines
\definecolor{light}{rgb}{.9,.9,.9}
\definecolor{dark}{rgb}{.7,.7,.7}
\put(0,1){\colorbox{light}{\makebox(9,102.5)[ss]{}} }
\put(11,1){\colorbox{light}{\makebox(2.4,80.5)[ss]{}} }
\put(16,16){\colorbox{light}{\makebox(9,18)[ss]{}} }
\put(26,16){\colorbox{light}{\makebox(3,10)[ss]{}} }
\put(0,6){\colorbox{light}{\makebox(102.5,9)[ss]{}} }
\put(0,1){\colorbox{light}{\makebox(50,2.5)[ss]{}} }
\put(83,13){\colorbox{dark}{\makebox(19,2)[ss]{}} }
{\thinlines
\put(11.5,83){\line(0,1){22}} \put(11.5,83){\line(1,0){4.5}}
\put(16,83){\line(0,1){22}} \put(90,20){\vector(-1,1){70}}
}
\put(0,0){\line(1,0){105}} \put(0,0){\line(0,1){105}}
\put(105,105){\line(-1,0){105}} \put(105,105){\line(0,-1){105}}
{\thinlines
\put(35,0){\line(0,1){105}} \put(70,0){\line(0,1){105}}
\put(0,35){\line(1,0){105}} \put(0,70){\line(1,0){105}}
\put(16,15){\line(0,1){20}} \put(16,15){\line(1,0){19}} }

\put(0,95){\vector(1,0){11}} \put(11,95){\vector(-1,0){11}}
\put(16,30){\vector(1,0){11}} \put(27,30){\vector(-1,0){11}}
\put(35,13){\vector(1,0){17}} \put(52,13){\vector(-1,0){17}}
\put(13,35){\vector(0,1){47}} \put(13,82){\vector(0,-1){47}}
\put(30,15){\vector(0,1){12}} \put(30,27){\vector(0,-1){12}}
\put(90,0){\vector(0,1){11}} \put(90,11){\vector(0,-1){11}}
\put(6,98){\makebox(0,0)[cc]{$k_2$}}
\put(21,33){\makebox(0,0)[cc]{$k_5$}}
\put(95,6){\makebox(0,0)[cc]{$k_3$}}
\put(10,55){\makebox(0,0)[cc]{$\gamma_2$}}
\put(43,10){\makebox(0,0)[cc]{$\gamma_3$}}
\put(27,20){\makebox(0,0)[cc]{$\gamma_5$}}
\end{picture}
\caption{Transforming set $A$ to $C$ in {\it Case 2a} for $k_3>0$}\label{f4}
\end{center}
\end{figure}

{\it Case 2b.}
Assume $\gamma_2 = 0$ and $k_5 > 0$. If $k_5>k_3$ or $k_5 = k_3$ and $\gamma_3 = 0$, then 
$|A| \leq sp^2$. In this case we move a point $(x,y)$ to $(y+k_2,x)$ for
every $(x,y) \in A_3$. Such transformation results in a set $C\subseteq V_2$
and it is easily seen that $|I(C)| \geq |I(A)|$. Moreover, $C$ satisfies 
conditions of {\it Case 1}. So we use arguments of {\it Case 1} to further 
transform $C$ to $\F_V^2(|A|)$. 

If $1 \leq k_5 < k_3 $ or $ 1 \leq k_5 = k_3 $ and $ \gamma_3 >0$, we move 
$(x,y) \in A_3 $ to $(y +k_2,x) \in V_2\setminus A$ for $p\leq x< sp$ and 
$0\leq y <k_5$. We show that for the resulting set $C$ we have
$|I(C)|-|I(A)|> 0$.

We move $(x,y) \in A_3$ for $(s-1)p \leq x <sp$ and $ 0 \leq y <k_5$ to 
$(y+k_2,x) \in V_2\setminus A$.
For the resulting set $C$, we have,
  $$|I(C)|-|I(A)| = k_5(s-1)p(k_2-(k_3-k_5))-\gamma_3 k_5 \geq 
k_5((s-1)p-\gamma_3)>0$$
which contradicts the optimality of $A$.

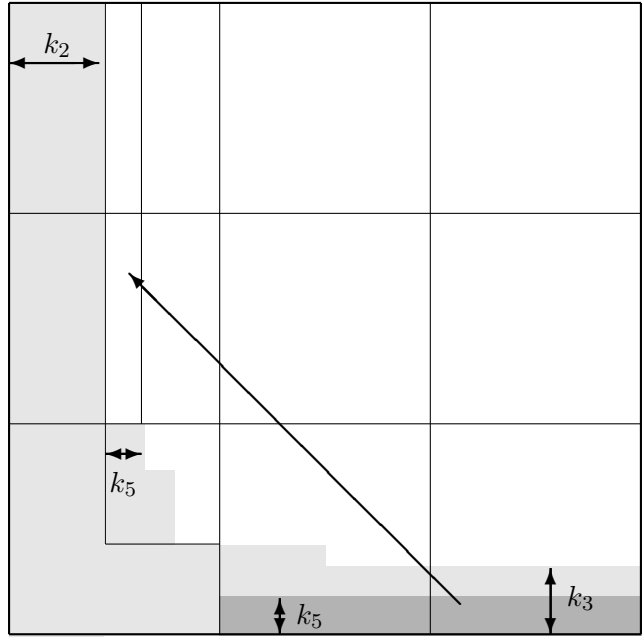
\begin{figure}[H]
\begin{center}\unitlength=0.8mm \begin{picture}(105,105)\thicklines
\definecolor{light}{rgb}{.9,.9,.9}
\definecolor{dark}{rgb}{.7,.7,.7}
\put(0,1){\colorbox{light}{\makebox(13,102.5)[ss]{}} }
\put(16,16){\colorbox{light}{\makebox(4,17.5)[ss]{}} }
\put(22,16){\colorbox{light}{\makebox(3,10)[ss]{}} }
\put(0,1){\colorbox{light}{\makebox(102.5,9)[ss]{}} }
\put(0,11){\colorbox{light}{\makebox(50,2.5)[ss]{}} }
\put(35,1){\colorbox{dark}{\makebox(67.5,4)[ss]{}} }
\put(0,0){\line(1,0){105}} \put(0,0){\line(0,1){105}}
\put(105,105){\line(-1,0){105}} \put(105,105){\line(0,-1){105}}
{\thinlines
\put(35,0){\line(0,1){105}} \put(70,0){\line(0,1){105}}
\put(0,35){\line(1,0){105}} \put(0,70){\line(1,0){105}}
\put(16,15){\line(0,1){20}} \put(16,15){\line(1,0){19}}
\put(16,35){\line(0,1){70}}
\put(22,35){\line(0,1){70}}
}
\put(0,95){\vector(1,0){15}} \put(11,95){\vector(-1,0){11}}
\put(90,0){\vector(0,1){11}} \put(90,11){\vector(0,-1){11}}
\put(45,0){\vector(0,1){6}} \put(45,6){\vector(0,-1){6}}
\put(16,30){\vector(1,0){6}} \put(22,30){\vector(-1,0){6}}
\put(8,98){\makebox(0,0)[cc]{$k_2$}}
\put(95,6){\makebox(0,0)[cc]{$k_3$}}
\put(50,3){\makebox(0,0)[cc]{$k_5$}}
\put(19,25){\makebox(0,0)[cc]{$k_5$}}
\put(75,5){\vector(-1,1){55}}
\end{picture}
\caption{Transforming set $A$ into $C$ in {\it Case 2b} for $k_5 <
k_3$}\label{f5}
\end{center}
\end{figure}

{\it Case 2c.}
Assume $\gamma_2 = 0$, $k_5 = 0$, $k_3 > 0$ and $k_2 < p-1$.

We first transform $A$ to $C$ by replacing the set
$C'= \{(k_2, y)\bigm | k_3 \leq y < k_3 + \gamma_5\}$ with
$\{(x+1, y)\bigm | (x, y) \in C'\}$ and if $\gamma_3>0$, replacing the set  
$C''= \{(x, 0)\bigm | p + \gamma_3 \leq x < sp \}$ with
$\{(x, k_3)\bigm | (x, 0)\in C''\}$. For the resulting set $C$ one has
$|I(A)|=|I(C)|$. Denote
\begin{eqnarray*}
P &=& \{(x, k_3 - \epsilon)\bigm | k_2 \leq x < sp \},\\
Q &=& \{(k_2, y)\bigm |k_3-\epsilon \leq y< k_3-\epsilon +sp-				k_2\} \\
  &=& \{(k_2,y)\bigm |k_2-(k_2-k_3+\epsilon) \leq y <sp-(k_2-				k_3+\epsilon) \}\\
  &=& \{(x+(k_2-k_3+\epsilon),y-(k_2-k_3+\epsilon)\bigm|(y,x)\in P\}
\end{eqnarray*} 
Now we further transform $C$ into $B$ by replacing $P$ with $Q$. 
We prove by induction on $s$ that $|I(B)| - |I(C)|>0$. For $s = 2$ this is 
proved in \cite{BE}. Assume $s \geq 3$ and this is true for all $s' < s$.

Since $|P\cap V_{s,1}| = p$ and
$s\geq 3$ we have $|P| = |Q| > p$. Let $P'\subseteq P$ be the set of $p$ 
vertices of $P$ with largest $x$-coordinates. Similarly, let $Q'\subseteq Q$ 
be the set of $p$ vertices of $Q$ with largest $y$-coordinates. Since
$|P'| = |Q'| = p$, the $x$-coordinates of vertices of $P\setminus P'$ and
$y$-coordinates of vertices of $Q\setminus Q'$ do not exceed $(s-1)p$. 
By induction on $s$ we have $d(Q\setminus Q') - d(P\setminus P') > 0$.
Lemma \ref{l3} implies 
$d(Q')-d(P')=(k_2-k_3+\epsilon)p-(k_2-k_3+\epsilon)(p - i)$.
Hence,
\begin{eqnarray*}
	|I(B)| - |I(C)| &\geq& d(Q) - d(P) \\
	&=& d(Q\setminus Q') + d(Q') - (d(P\setminus P') + d(P'))\\
	&=& d(Q\setminus Q') - d((P\setminus P')) + (k_2-k_3+\epsilon)p- (k_2-k_3+\epsilon)(p-i)\\ 
	&>& (k_2-k_3+\epsilon)p-(k_2-k_3+\epsilon)(p-i)\\ 
	&=& (k_2-k_3+\epsilon)i \\
	&\geq & 0
\end{eqnarray*}

\begin{figure}[H]
\begin{center}\unitlength=0.8mm \begin{picture}(105,105)\thicklines
\definecolor{light}{rgb}{.9,.9,.9}
\definecolor{dark}{rgb}{.7,.7,.7}
\put(0,1){\colorbox{light}{\makebox(13,102.5)[ss]{}} }
\put(22,16){\colorbox{light}{\makebox(3,10)[ss]{}} }
\put(0,6){\colorbox{light}{\makebox(102.5,3)[ss]{}} }
\put(15,1){\colorbox{light}{\makebox(45,3)[ss]{}} }
\put(15.5,11.5){\colorbox{dark}{\makebox(87,2)[ss]{}} }
\put(0,0){\line(1,0){105}} \put(0,0){\line(0,1){105}}
\put(105,105){\line(-1,0){105}} \put(105,105){\line(0,-1){105}}
{\thinlines
\put(35,0){\line(0,1){105}} \put(70,0){\line(0,1){105}}
\put(0,35){\line(1,0){105}} \put(0,70){\line(1,0){105}}
\put(16,10.5){\line(0,1){25}}
\put(16,35){\line(0,1){70}}
\put(22,10.5){\line(0,1){94.5}}
}
\put(0,95){\vector(1,0){15}} \put(11,95){\vector(-1,0){11}}
\put(90,0){\vector(0,1){11}} \put(90,11){\vector(0,-1){11}}
\put(8,98){\makebox(0,0)[cc]{$k_2$}}
\put(95,6){\makebox(0,0)[cc]{$k_3$}}

\put(48,5){\makebox(0,0)[cc]{$\gamma_3$}}
\put(47,3){\vector(1,0){16}} \put(51,3){\vector(-1,0){16}}
\put(30,20){\makebox(0,0)[cc]{$\gamma_5$}}
\put(25,16){\vector(0,1){11}} \put(25,26){\vector(0,-1){11}}

\put(75,12){\vector(-1,1){55}}
\end{picture}
\caption{Transforming set $C$ into $B$ in {\it Case 2c}}\label{f6}
\end{center}
\end{figure}
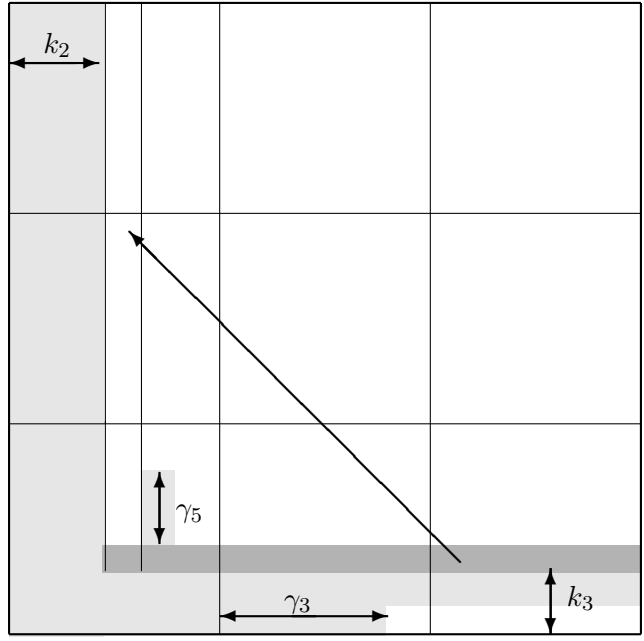

{\it Case 2d.}
Assume $\gamma_2 = 0$, $k_5 = 0$, $k_3 > 0$ and $k_2 = p-1$.
If $\gamma_3 > 0$ we first transform $A$ to $C$ by replacing the set  
$C' = \{(x, 0)\bigm | p + \gamma_3 \leq x < sp \}$ with
$\{(x, k_3)\bigm | (x, 0)\in C'\}$. For the resulting set $C$ one has
$|I(A)|=|I(C)|$, so $C$ is optimal. Denote
\begin{eqnarray*}
P &=& \{(x, k_3 - \epsilon)\bigm | p \leq x < sp \},\\
Q &=& \{(p-1,y)\bigm | k_3-\epsilon +\gamma_5 + 1 \leq y<sp-p+k_3-					\epsilon +\gamma_5 + 1\}\\
  &=& \{(p-1,y)\bigm | p-(p-1-k_3+\epsilon -\gamma_5)\leq y<sp-(p-1-				k_3+\epsilon -\gamma_5)\}\\
  &=& \{(x+(p-1-k_3 + \epsilon),y-(p-1-k_3+\epsilon -\gamma_5)\bigm |				(y,x)\in P\}.
\end{eqnarray*}
Let $B=(C\setminus P)\cup Q$. We prove by induction on $s$ that $|I(B)| -
|I(C)|>0$. For $s = 2$ this is proved in \cite{BE}. Assume $s \geq 3$ and
this is true for all $s' < s$.

Since $|P\cap V_{s,1}| = p$ and
$s\geq 3$ we have $|P| = |Q| > p$. Let $P'\subseteq P$ be the set of $p$ 
vertices of $P$ with largest $x$-coordinates. Similarly, let $Q'\subseteq Q$ 
be the set of $p$ vertices of $Q$ with largest $y$-coordinates. Since
$|P'| = |Q'| = p$, the $x$-coordinates of vertices of $P\setminus P'$ and
$y$-coordinates of vertices of $Q\setminus Q'$ do not exceed $(s-1)p$. 
By induction on $s$ we have $d(Q\setminus Q') - d(P\setminus P') > 0$.
Lemma \ref{l3} implies
$d(Q')-d(P')=(p-1-k_3 + \epsilon)p-(p-1-k_3+\epsilon -\gamma_5)(p - i)$. 
Hence,
\begin{eqnarray*}
	|I(B)| - |I(C)| &\geq& d(Q) - d(P) \\
	&=& d(Q\setminus Q') + d(Q') - (d(P\setminus P') + d(P'))\\
	&=& d(Q\setminus Q') - d((P\setminus P')) + (p-1-k_3 + \epsilon)p-(p-1-k_3+\epsilon -\gamma_5)(p - i)\\ 
	&>& (p-1-k_3 + \epsilon)p-(p-1-k_3+\epsilon -\gamma_5)(p - i)\\ 
	&>& \gamma_5(p-i) \\
	&\geq & 0.
\end{eqnarray*}
Therefore, $|I(B)| - |I(C)|>0$ for all $s$, which, in turn, contradicts 
the optimality of $A$.

{\it Case 2e.}
Assume $\gamma_2 = 0$, $k_5 = 0$ and $k_3 = 0$.
We can assume that $\gamma_3 > (s-2)p$, since if this is not the case we 
can use arguments of {\it Case 1} to transform $A$ into $\F_V^2(|A|)$. 

If $\gamma_5 \geq k_2$ then we transform $A$ into a set $B$ by replacing
$B'=\{(x,0)\bigm | \gamma_5 + 1 \leq x < p+\gamma_3\}$ with
$B''=\{k_2,y)\bigm | (y,0)\in B'\}$. Since $B'$ and $B''$ represent
isomorphic segment in $V$ and  $k_2\geq k_3$ we have that 
$|I(B)| - |I(A)| \geq 0$. 
Now we can use arguments of {\it Case 1} to transform A into $\F_V^2(|A|)$.

{\it Case 2f.}
Assume $\gamma_2 = 0$, $k_5 = 0$, $k_3 = 0$, 
$\gamma_3 >(s-2)p$ and $\gamma_5 < k_2$.
Denote
\begin{eqnarray*}
P &=& \{(x,0)\bigm | k_2+1\leq x < p+\gamma_3\}\\
Q &=& \{(k_2,y)\bigm | \gamma_5 + 1\leq y<\gamma_5 +1+p+\gamma_3 -k_2 		-1\}\\
  &=& \{(k_2,y)\bigm | k_2+1-(k_2-\gamma_5)\leq y<p+\gamma_3-(k_2-				\gamma_5)\}\\
  &=& \{(x+k_2,y-(k_2-\gamma_5))\bigm | (y,x)\in P\}.
\end{eqnarray*}
Let $B = (A\setminus P)\cup Q$. We prove by induction on $s$ that
$|I(B)| - |I(A)|>0$. For $s = 2$ this is proved in \cite{BE}.
Assume $s \geq 3$ and this is true for all $s' < s$.

Since $\gamma_3 >(s-2)p$ and
$s\geq 3$ we have $|P| = |Q| > p$. Let $P'\subseteq P$ be the set of $p$ 
vertices of $P$ with largest $x$-coordinates. Similarly, let $Q'\subseteq Q$ 
be the set of $p$ vertices of $Q$ with largest $y$-coordinates. Since
$|P'| = |Q'| = p$, the $x$-coordinates of vertices of $P\setminus P'$ and
$y$-coordinates of vertices of $Q\setminus Q'$ do not exceed $(s-1)p$. 
By induction on $s$ we have $d(Q\setminus Q') - d(P\setminus P') > 0$.
Lemma \ref{l3} implies $d(Q')-d(P')=k_2p-(k_2-\gamma_5)(p - i)$.
One has
\begin{eqnarray*}
	|I(B)| - |I(A)| &\geq& d(Q) - d(P) \\
	&=& d(Q\setminus Q') + d(Q') - (d(P\setminus P') + d(P'))\\
	&=& d(Q\setminus Q') - d((P\setminus P')) + k_2p- (k_2-\gamma_5)			(p-i)\\ 
	&>& k_2p-(k_2-\gamma_5)(p-i)\\ 
	&>& \gamma_5 (p-i) \\
	&\geq & 0
\end{eqnarray*}

\begin{figure}[H]
\begin{center}\unitlength=0.8mm \begin{picture}(105,105)\thicklines
\definecolor{light}{rgb}{.9,.9,.9}
\definecolor{dark}{rgb}{.7,.7,.7}
\put(0,1){\colorbox{light}{\makebox(13,102.5)[ss]{}} }
\put(21,1.2){\colorbox{dark}{\makebox(70,2.5)[ss]{}} }
\put(21,1.2){\colorbox{dark}{\makebox(70,2.5)[ss]{}} }
\put(15,1){\colorbox{light}{\makebox(4,20)[ss]{}} }
\put(0,0){\line(1,0){105}} \put(0,0){\line(0,1){105}}
\put(105,105){\line(-1,0){105}} \put(105,105){\line(0,-1){105}}
{\thinlines
\put(35,0){\line(0,1){105}} \put(70,0){\line(0,1){105}}
\put(0,35){\line(1,0){105}} \put(0,70){\line(1,0){105}}
\put(15.5,22){\line(0,1){70}}
\put(21,22){\line(0,1){70}}
\put(15.5,92.2){\line(1,0){5.7}}
\put(65,4){\vector(-1,1){43}}
\put(75,8){\makebox(0,0)[cc]{$\gamma_3$}}
\put(63,3){\vector(1,0){30}} \put(65,3){\vector(-1,0){30}}
\put(24,13){\makebox(0,0)[cc]{$\gamma_5$}}
\put(20,11){\vector(0,1){11}} \put(20,17){\vector(0,-1){11}}
\put(0,95){\vector(1,0){15}} \put(11,95){\vector(-1,0){11}}
\put(8,98){\makebox(0,0)[cc]{$k_2$}} }
\end{picture}
\caption{Transforming set $A$ into $B$ in {\it Case 2f}}\label{f7}
\end{center}
\end{figure}
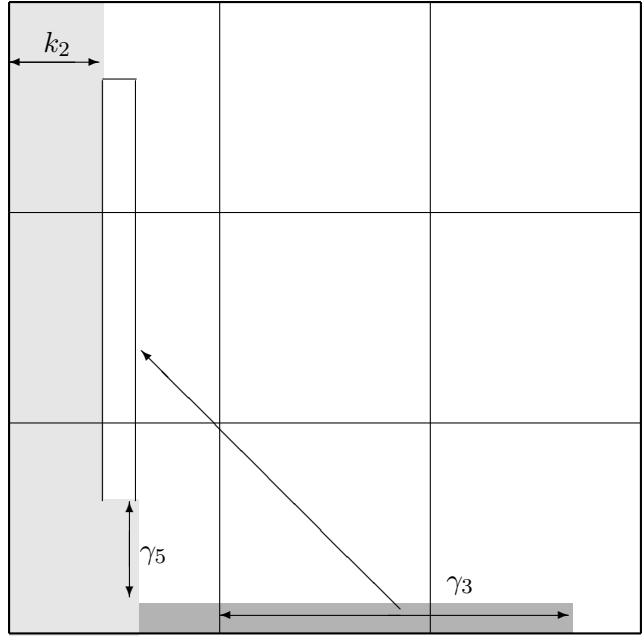

Hence, $|I(B)| - |I(A)|>0$ and we have a contradiction with the optimality
of $A$ for all $s$.

{\it Case 3.}
Assume $A\subset V^2$ and $A_{2,s}\neq \emptyset$. Hence, 
$A_{s,2}\neq\emptyset$. Let $\overline{A}=V^2\setminus A$. We transform 
$\overline{A}$ by replacing every $(x,y)\in A$ with $(sp-x, sp-y)$. 
By Lemma \ref{l5} the obtained set $B$ is optimal iff $A$ is optimal.
Moreover, $B$ is compressed and satisfies conditions of 
{\it Case 1} or {\it Case 2}.
Hence we can apply arguments from those cases to transform $B$ to 
$\F_V^2(|B|)$ without decreasing the number of its inner edges.
\qed

\section{Further results and concluding remarks}

Counterexamples show that not for every $\delta$-sequence there is a
corresponding graph $(V,E)$ \cite{B2}. A necessary (but not sufficient) 
condition for the graph existence is $\delta(i+1)\leq \delta(i)+1$ for 
$1\leq i < |V|$. For a graph to be connected it must hold $\delta(i)>0$ for
$i>1$. We call $\delta$-sequences satisfying these two conditions 
{\em appropriate}.

How typical is that the lexicographic order is optimal for the products of
regular graphs? To answer this question we generated all appropriate symmetric
$\delta$-sequences of small length $|V|$ and verified the corresponding
graphs $G$ (if exist) for the optimality on $G\times G$ \cite{D}. We call 
$\delta_G$ {\em isoperimetric} if $G\times G$ admits some optimal order.

It turns out that the lexicographic order is optimal for all of the explored 
$\delta$-sequences with an exclusion of the Petersen graph. At the same 
time we identified new graphs that admit optimal orders for all their 
Cartesian powers (due to the local-global principle \cite{AC}).
Graphs marked with an asterisk in the tables below were not previously studied.
There are no any new graphs for $n\leq 8$.

For $n=9$ there are $10$ symmetric appropriate $\delta$-sequences, out of 
which only 5 are isoperimetric.

\begin{table}[htb]
\begin{center}
\begin{tabular}{l|l}
$\delta$-sequence & graph \\\hline
$(0,1,1,2,2,2,3,3,4)$ & interesting new graph$^*$\\
$(0,1,2,1,2,3,2,3,4)$ & $K_3\times K_3$  or $K_9 - 2C_9^*$ or
                        $K_9 - (K_3\times K_3)^*$\\
$(0,1,2,2,3,4,4,5,6)$ & $K_{3,3,3}$  or $K_9 - 3C_3^*$ \\
$(0,1,2,3,3,3,4,5,6)$ & $K_9 - C_9^*$ \\
$(0,1,2,3,4,5,6,7,8)$ & $K_9$ \\
\end{tabular}
\caption{Symmetric isoperimetric sequences of length 9}
\end{center}
\end{table}

For $n=10$ there are $36$ symmetric $\delta$-sequences, out of which only 11
are isoperimetric. However, only one of them leads to a graph which was not
studied before.

\begin{table}[htb]
\begin{center}
\begin{tabular}{l|l}
$\delta$-sequence & graph \\\hline
$(0,1,1,1,2,1,2,2,2,3)$ & Petersen graph\\
$(0,1,1,2,1,2,1,2,2,3)$ & $C_5\times P_1$ \\
$(0,1,1,2,2,2,2,3,3,4)$ & $K_{5,5} - M$ \\
$(0,1,1,2,2,3,3,4,4,5)$ & $K_{5,5}$ \\
$(0,1,2,2,2,3,3,3,4,5)$ & $K_{10} - 4M$ \\
$(0,1,2,2,3,3,4,4,5,6)$ & $K_{10} - 3M$ \\
$(0,1,2,3,3,4,4,5,6,7)$ & $K_{10} - 2C_5^*$ \\
$(0,1,2,3,4,1,2,3,4,5)$ & $K_5 \times K_1$ \\
$(0,1,2,3,4,3,4,5,6,7)$ & $K_{10} - 2M$ \\
$(0,1,2,3,4,4,5,6,7,8)$ & $K_{10} - M$ \\
$(0,1,2,3,4,5,6,7,8,9)$ & $K_{10}$
\end{tabular}
\caption{Symmetric isoperimetric sequences of length 10}
\end{center}
\end{table}

For $n=11$ there are $28$ symmetric $\delta$-sequences, out of which only 5
are isoperimetric.

\begin{table}[htb]
\begin{center}
\begin{tabular}{l|l}
$\delta$-sequence & graph \\\hline
$(0,1,2,2,2,3,4,4,4,5,6)$ & $K_{11} - 2C_{11}^*$ \\
$(0,1,2,2,3,3,3,4,4,5,6)$ & previously unknown$^*$\\
$(0,1,2,3,3,4,5,5,6,7,8)$ & previously unknown$^*$\\
$(0,1,2,3,4,4,4,5,6,7,8)$ & $K_{11} - C_{11}^*$ \\
$(0,1,2,3,4,5,6,7,8,9,10)$ & $K_{11}$
\end{tabular}
\caption{Symmetric isoperimetric sequences of length 11}
\end{center}
\end{table}

In \cite{BE} we constructed dense regular graphs $G$ admitting optimal 
orders for $G\times G$ by removing some perfect matchings from $K_p$ or 
$K_{p,p}$. This way $|V_G|$ must be even. To avoid this restriction we
attempted to remove 2-factors from $K_p$. It turns out that for the resulting
graph to admit some optimal order on $G\times G$ it is important which
2-factor to remove. However, in the simplest setting, how about removing a
Hamiltonian cycle from $K_p$? Particularly interesting is the case of odd $p$,
which is not covered by \cite{BE}. This way we came to the graph
$K_p-C_p$ with
  $$\delta_{K_p-C_p} =
    (\{0,1,2,\dots,(p-3)/2\},\;\;\{(p-3)/3\},\;\;\{(p-3)/2,\dots,p-3\}).$$ 
This type of $\delta$-sequence has a ``plateau'' in the middle and is not 
covered by Theorem \ref{main}. However,
\begin{tm} {\rm (DeVries \cite{D})}
Lexicographic order is optimal for $(K_p-C_p)^2$ for odd $p\geq 9$ or $p=5$.
\end{tm}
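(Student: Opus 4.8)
\medskip
\noindent\textit{Proof proposal.}
The plan is to reuse the compressed-set and exchange machinery developed for Theorem~\ref{main}, with the plateau of $\delta_{K_p-C_p}$ as the one genuinely new feature. Write $p=2q+3$, so that $q=(p-3)/2\ge 3$, and label the vertices $0,1,\dots,p-1$ by an optimal order of $K_p-C_p$; then the vertices carry $\delta$-values $0,1,\dots,q,\,q,\,q,\,q+1,\dots,2q$. Two structural facts drive everything. First, by Theorem~\ref{thm1} the vertex sets $\{0,\dots,q\}$ and $\{q+2,\dots,p-1\}$ (the first and third monotonic sets) each induce a clique $K_{q+1}$, while the lone ``plateau'' vertex $q+1$ of the second monotonic set has $\delta$-value $q$, so that columns and rows $q,\,q+1,\,q+2$ of the matrix $M_{V^2}$ all carry the same weight profile as column/row $q$. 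Second, deleting any one of these three equal $\delta$-entries turns $\delta_{K_p-C_p}$ into $\delta_{H_{2,q+1,1}}$, for which the lexicographic order is optimal on the square by Theorem~\ref{tm2} (here $q\ge 3$, so $q+1\ge 3$ and $1\le 1\le q$). By Theorem~\ref{lgp} it suffices to treat $(K_p-C_p)^2$; and since $K_p-C_p$ admits an optimal order (the single-graph EIP is routine: an independent set of $C_p$ is optimal for $m\le q+1$, and the rest follows from Lemma~\ref{l5}), there exist compressed optimal sets $A$ of $M_{V^2}$, for which $|I(A)|=\sum_{(x,y)\in A}(\delta(x)+\delta(y))$ by~(\ref{e1}). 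So it is enough to show that the initial segment of the lexicographic order maximizes this weight among downsets of prescribed size. Using Lemma~\ref{l5} together with the reflection $(x,y)\mapsto (p-1-x,p-1-y)$ of the complement we may assume $|A|\le\lceil p^2/2\rceil$, and after symmetrizing we may assume $A$ is stable under reflection about the main diagonal.

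Next I would record the bookkeeping lemmas adapted to this $\delta$-sequence, in the style of Lemmas~\ref{l1}--\ref{l4}. Writing $d(\cdot)$ for the marginal edge-gain as in Section~3, a direct computation with the explicit $\delta$-sequence shows that shifting a block of consecutive cells of a column one row up increases $d$ by $1$ as long as the shift stays inside a strictly increasing stretch of $\delta$ and leaves $d$ unchanged whenever the shift crosses the plateau; the analogues of Lemmas~\ref{l2}, \ref{l3} and \ref{l4} (reflection about the diagonal, horizontal shifts of a reflected block, and moving a whole column one step to the left) follow verbatim, the constant $p-i$ of Section~3 being replaced by this piecewise increment. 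These are the tools for moving the mass of $A$ toward the lexicographic configuration one cell at a time.

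With these lemmas in hand the argument splits along the same lines as Cases~1--3 of Section~4, the $3\times 3$ block decomposition of $M_{V^2}$ into clique/plateau pieces playing the role of the $s\times s$ decomposition there. When $A$ avoids the last clique-block of columns one first applies column compressions (as in Case~1) to reduce $A$ to a downset, the two monotone stretches $\{0,\dots,q\}$ and $\{q+2,\dots,2q+2\}$ then behaving exactly as the two blocks of $H_{2,q+1,1}$ in Theorem~\ref{tm2}, so that the only extra work is to push the single plateau column $q+1$ (and, symmetrically, row $q+1$) into place by the exchange lemmas above; inside the first clique-block the rearrangement is free, just as in Case~1 of the proof of Theorem~\ref{thm2}. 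When $A$ reaches into the bottom-left corner block one peels off, as in Cases~2a--2f, the top $p$ cells of an over-long column block sitting in the third clique and applies the same estimate to the shorter remainder, giving an induction on $p$; each elementary exchange is certified by an inequality of the form ``(number of shift steps)$\,\cdot\,i$ minus a correction term is nonnegative'', exactly as in Section~4, with $i$ now the residual increment $1$ from the strictly monotone parts. Finally, configurations straddling the anti-diagonal are reduced to the previous ones by reflecting the complement, as in Case~3.

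The crux, and the only place the hypothesis $p\ge 9$ is used, is the plateau itself. Because $\delta$ is flat on $\{q,q+1,q+2\}$, any exchange confined to these rows and columns has zero first-order effect, so one cannot always extract a strict improvement and must instead argue that a finite weight-preserving sequence of such moves still terminates at the lexicographic initial segment; and when a move straddles the boundary between the plateau and one of the monotone stretches, the correcting inequality closes only when there is enough room on either side of the plateau, that is, when $q=(p-3)/2\ge 3$. This is precisely why $p=7$ (where $q=2$) falls outside the theorem, whereas $p=5$ is the degenerate case $K_5-C_5=C_5$, whose square is classically known to admit an optimal order. Making every boundary estimate in the analogues of Cases~2a--2f close for $q\ge 3$ is where the real effort lies.
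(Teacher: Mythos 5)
The paper does not actually prove this statement---it is quoted from DeVries's thesis \cite{D}---so the only fair comparison is with the machinery of Sections~3--4 that you propose to adapt, and there the proposal has a genuine gap: the steps it defers are exactly the ones that do not transfer. Lemmas~\ref{l1}--\ref{l3} rest on the closed formula $d(\{(x,y)\})=(y\bmod p)+\lfloor y/p\rfloor(p-i)$, i.e.\ on the fact that $\delta_{H_{s,p,i}}$ consists of $s$ monotone runs of \emph{equal} length $p$, each shifted by the \emph{same} constant $p-i$. For $\delta_{K_p-C_p}$ the runs have lengths $q+1,\,1,\,q+1$, there is no period and no constant offset, so your claim that the analogues ``follow verbatim, with $p-i$ replaced by a piecewise increment'' is precisely what has to be proved; moreover in the plateau the increment is $0$, which destroys the strict inequalities that drive every exchange in Cases~1--2 of Section~4 (the factors $(s-1)(p-i)-1>0$, $(k_2-k_3+\epsilon)i$, $\gamma_5(p-i)$, etc.). Equally important, the proof of Theorem~\ref{main} is an induction on the number $s$ of monotonic sets, with base case $s=2$ supplied by Theorem~\ref{tm2}: the matrix splits into congruent $p\times p$ blocks, and peeling off the outer $p$ cells of a column ($P'$, $Q'$ in Cases~2c--2f) produces a smaller instance of the same family. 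For $K_p-C_p$ the three monotonic sets are not of equal size, the block structure is $\,(q+1)+1+(q+1)$ rather than $s$ copies of $p$, and no such peeling yields a smaller member of the family, so there is no induction to run and nothing for Theorem~\ref{tm2} to serve as the base of. Your observation that deleting one of the three equal $\delta$-entries yields $\delta_{H_{2,q+1,1}}$ does not repair this: deleting an entry of a $\delta$-sequence corresponds to no operation on downsets of $M_{V^2}$, so it gives no reduction of the two-dimensional weight-maximization problem.

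The small cases show why this cannot be waved through: the paper records that $(K_7-C_7)^2$ admits \emph{no} optimal order at all, so whatever replaces the boundary estimates must fail at $q=2$ in an essential way. Your sketch attributes this to ``not enough room on either side of the plateau'' but exhibits no inequality that demonstrably closes for $q\geq 3$ and opens for $q=2$; likewise the purely weight-preserving moves inside the plateau rows/columns, where you concede no strict improvement exists, need an explicit termination argument toward the lexicographic segment, which is not supplied. Until the piecewise exchange lemmas are stated and proved and the case analysis is redone with them---this is the actual content of the theorem---the proposal is an outline rather than a proof. Two minor points: the appeal to Theorem~\ref{lgp} is unnecessary (the statement concerns only the square), and optimality of the lexicographic order for $p=5$ (where $K_5-C_5=C_5$) should be verified or cited rather than called classical, since by \cite{C} this is exactly the boundary case for cycles.
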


It is interesting to mention that $(K_7-C_7)^2$ does not admit any optimal
order.

\end{document}